\newcommand{\real}{\ensuremath{\mathbb{R}}}
\newcommand{\C}{\ensuremath{\mathcal{C}}}
\newcommand{\D}{\ensuremath{\mathcal{D}}}
\newcommand{\E}{\ensuremath{\mathcal{E}}}
\newcommand{\G}{\ensuremath{\mathcal{G}}}
\newcommand{\V}{\ensuremath{\mathcal{V}}}
\newcommand{\neigh}{\ensuremath{\mathcal N_i}}
\DeclareMathOperator{\col}{\mathrm{col}}
\DeclareMathOperator{\diag}{\mathrm{diag}}
\newtheorem{theorem}{Theorem}
\newtheorem{property}{Property}
\newtheorem{assumption}{Assumption}
\newtheorem{lemma}{Lemma}
\newtheorem{remark}{Remark}
\newtheorem{problem}{Problem}
\newtheorem{proposition}{Proposition}
\newcommand\smallmat[1]{\left[\begin{smallmatrix}#1\end{smallmatrix}\right]}
\newcommand\infnorm[1]{\|#1\|_\infty}
\newcounter{deff}
\newenvironment{deff}[1]{\refstepcounter{deff}\indent \textit{Definition \thedeff \ (#1):} }{\hfill}
\title{\LARGE \bf
Distributed hybrid observer with prescribed convergence rate for a linear plant using multi-hop decomposition}
\author{Riccardo Bertollo$^1$, Pablo Mill\'{a}n$^2$, Luis Orihuela$^2$, Alexandre
Seuret$^3$, Luca Zaccarian$^4$%
\thanks{$^1$R. Bertollo is with the 
University of Trento, Italy (e-mail: riccardo.bertollo@unitn.it).
$^2$P. Mill\'{a}n and L. Orihuela are with 
Universidad Loyola Andaluc\'{i}a, Spain (e-mail: pmillan,dorihuela@uloyola.es).
$^3$A. Seuret is with Universidad de Sevilla, 
Spain (e-mail: aseuret@us.es), 
$^4$L. Zaccarian is with LAAS-CNRS,  University of Toulouse, CNRS, Toulouse, France and the University of Trento, Italy (e-mail: lzaccarian@laas.fr).}%
\thanks{Research supported in part by ANR via grant HANDY (number ANR-18-CE40-0010), by project METRICA (Grant PID2020-117800GB-I00 funded by AEI/MCeI) and project IRRIGATE (Grant PY20 RE 017 funded by Junta de Andalucía).}}
\begin{document}

  \maketitle
  \thispagestyle{empty}
  \pagestyle{empty}

  \begin{abstract}
    We propose a distributed hybrid observer for a sensor network where the plant and local observers run in continuous time and the information exchange among the sensing nodes is sampled-data. Process disturbances, measurement noise and communication noise are considered, and we prove that under some necessary detectability assumptions the observer gains can be tuned to guarantee exponential Input-to-State Stability with a prescribed convergence rate. Simulations illustrate the performance of the proposed observer.
  \end{abstract}
  
  \begin{keywords}
  Distributed estimation; Hybrid observer; Linear systems; Input-to-State Stability; Sensor network
  \end{keywords}

  \section{Introduction}
    
  
  One of the most important tasks for a Wireless Sensor Network (WSN) is state estimation, which finds important applications in different applications areas such as robotics, power systems, smart grids or traffic management, among others.
  In distributed state estimation, 
  each smart sensor (referred usually as \emph{agent}) estimates the plant state with limited locally available information, possibly coming from seldom information exchange with neighboring agents.
%
 %
  Perhaps the most popular approach for this setting is the Distributed Kalman Filter (DKF) \cite{olfati2007}. DKF algorithms involve several steps that typically include: i) prediction of prior states using the system model, ii) correction based on local measurements with a Kalman gain, and iii) information fusion from neighboring agents based on consensus \cite{carli2007,battistelli2014,he2019distributed}, diffusion \cite{cattivelli2010}, or gossip-based algorithms \cite{kar2010}. An excellent review of these techniques can be found in \cite{he2020}. 
    
  Unlike DKF, distributed Luenberger observers do not use statistical noise/disturbance descriptions \cite{park2016design} and resort to deterministic approaches. To solve the problem in a distributed way, different state-based partitions have been proposed. In \cite{mitra2018distributed}, a new estimation structure based on the observability canonical form is given, which depends on locally measured outputs and an agent indexing or ordering. An alternative subspace decomposition method, called multi-hop decomposition, is proposed in \cite{delNozalAut19}, where each agent constructs its own observability staircase form. These works provide a fully distributed design for the estimator in discrete time \cite{park2016design,mitra2018distributed,delNozalAut19} or continuous time \cite{delNozalAut19}.
  However, the case where the local measurements are available in continuous time but the communication among the agents happens in discrete time cannot be analyzed with these approaches.
  That situation, which has been studied only in the centralized case (see e.g. \cite{etienne2017observer,Ferrante2016state,sferlazza2018time} and references therein), is certainly relevant in practice. Indeed, on the one hand, requiring continuous-time communication would be too stringent and, on the other hand, designing sampled-data solutions with a single sampling time forces the algorithm to run at the inter-agent communication rate, typically much slower than the rate at which the local outputs can be locally measured.
  
 This paper develops a new hybrid distributed observer exploiting the multi-hop decomposition \cite{delNozalAut19} for linear plants equipped with a sensor network.
 Each network node runs locally a hybrid observer in continuous time for the locally observable modes, exploiting the continuous availability of locally measured outputs through continuous Luenberger-like dynamics.
 {Our work exploits the multi-hop decomposition in \cite{delNozalAut19} for the two-time-scale setting of fast local output injection and slow sampled-data inter-agent exchange, in addition to rigorously characterizing the effect of (continuous-time) process disturbance/sensor noise and (discrete-time) communication exchange disturbances.
 Using an ad-hoc Lyapunov construction (different from \cite{delNozalAut19}), we guarantee a prescribed convergence rate and input-to-state stability (ISS) properties.}

    \emph{Notation:} Scalar $a_{ij}$ denotes the $(i,j)$ entry of matrix $A$.
Given $V \subset \mathbb{N}$ and an ordered set $Y_i$ of matrices, for decreasing values of $i$ from top to bottom, we denote by $\diag\nolimits\limits_{i \in V} \left( Y_i \right)$
a block diagonal matrix whose diagonal blocks are $Y_i$, while
for the case where matrices $Y_i$ all have the same number of columns,
 $\col\nolimits\limits_{i \in V} \left( Y_i \right)$, where $V \subset \mathbb{N}$ and $Y_i$  
 a matrix vertically stacking $Y_i$.
%
    We often denote $\left( u, v \right):= \left[ u^\top, v^\top \right]^\top$.
    $|x| := \sqrt{x^\top x}$ is the Euclidean norm of vector $x$.
    $\infnorm{x}$ denotes the $\mathcal L$-infinity norm of the signal $x(\cdot)$.
    Given two sets $\V \subset \mathbb N$ and $\E \subset \V \times \V$, $\G(\V,\E)$ is the (directed) graph with nodes $\V$ and edges $\E$.
    Finally, $\neigh$ denotes the set of nodes $j$ such that $(j,i)$ is an edge of $\G(\V,\E)$, called in-neighborhood of node $i$.

  \section{System description and problem formulation}
  \label{sec:multihop}
\subsection{System data}
Consider a linear plant subject to an external disturbance driven by the following equation
 \begin{align}\label{eq:plant}
      \begin{split}
        &\dot x = A x + d_0, \\
        &y_i = C_i x + d_{i}, \quad \forall i\in \mathcal V:= \{1,\dots,p\}
      \end{split}
    \end{align}
  where $x \in \real^n$ is the plant state, $d_0$ is some external disturbance, matrices $A$, $C_i$, for all $i\in \mathcal V$ are constant and known. 
  Each sensor measures an output $y_i$ affected by measurement noise $d_i$. As shown in Fig.~\ref{fig:block_diagram}, each output measurement $y_i \in \real^{m_i}$, with $m_i\leq n$, is available only locally at the sensor's location within our sensors network. Each pair $(C_i, A)$ is not necessarily detectable.
  \begin{figure}[t]
    \centering
    \includegraphics[width = 0.67\columnwidth]{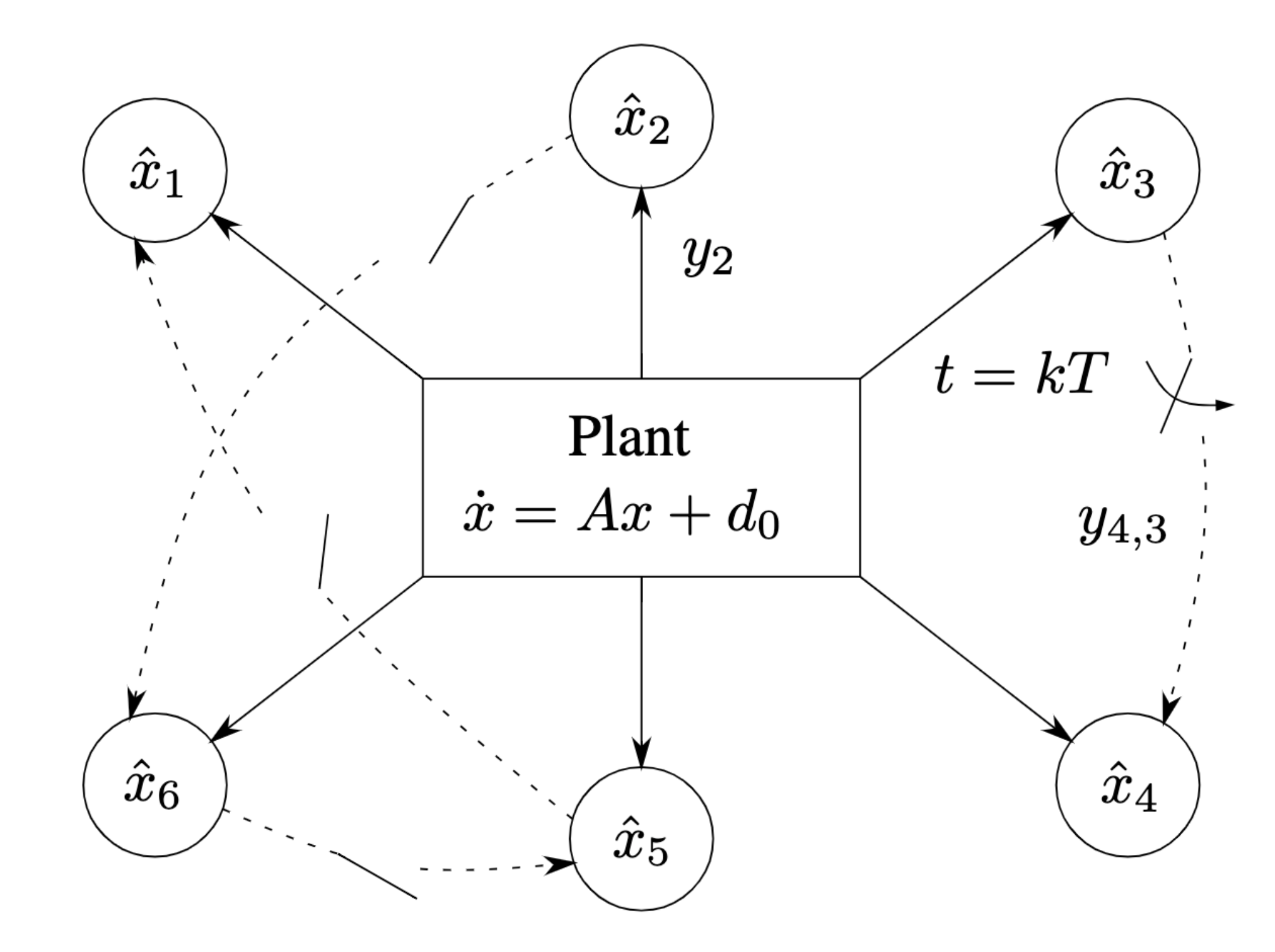}
    \caption{Sample network with $p=6$ sensors. Solid arrows are the locally available continuous-time measurements $y_i$, dashed lines are periodically sampled information
exchanges among the agents.}
    \label{fig:block_diagram}
  \end{figure}

\subsection{Cross-sensors information exchange}

In our solution each agent is equipped with a local observer providing a state estimate $\hat x_i$, $i\in\mathcal V$. Since each measurement $y_i$ is not necessarily sufficient for the local observer to reconstruct $x$, cross-sensors communication is needed. We assume that the sensors  $\V$ are connected through a communication graph $\G = \left( \V, \E \right)$, wherein 
the exchange of information is performed synchronously and periodically with a period $T>0$.
The information exchange along each edge of the graph $(i,j) \in \E$ is also affected by transmission noise representing, e.g., small delays or quantization errors.
Summarizing, each sensor $i \in {\mathcal V}$ has access to two types of measurements, namely
\begin{align}
   & y_i(t)  ,   && \forall t \in {\mathbb R} , \\
&    y_{i,j}(kT) := \hat x_j(k T) + w_{i,j}(k), &&  \forall k \in \mathbb N, \forall j \in \neigh ,
\end{align}
where the second set of measurements is only available at periodic instants of time (sampled-data) $t_k = k T$.

In the previous equation $w_{i,j}$ is a (discrete-time) communication disturbance affecting the information transmitted from sensor $j$ to sensor $i$. In the sequel, we will denote with $w_i := \col\nolimits\limits_{j \in \neigh} \left( w_{i,j} \right)$ the collection of the communication disturbances transmitted to sensor $i$, and we will collect together all $d_i$ and $w_i$ as follows
\begin{align}
\label{eq:noise}
    d := \col\nolimits\limits_{i \in \{0,\ldots,p\}} \left( d_i \right), \quad\quad
    w := \col\nolimits\limits_{i \in \V} \left( w_i \right).
\end{align}

\subsection{Problem formulation}

We design here a  pool of distributed observers only using the locally available information $y_i(t), y_{i,j}(kT)$.
Due to the disturbances acting on the plant and on the measurements, an asymptotic state estimation is not achievable. Therefore we establish an ISS bound from the disturbances \eqref{eq:noise} to the estimation errors, as stated below.
\begin{problem}
\label{problem}
Design a \textit{distributed hybrid or sampled-data observer} for system \eqref{eq:plant} such that, for a given exponential rate $\alpha > 0$, the following ISS property holds 
    \begin{align}
      \label{eq:ISS}
        \begin{split}
          \left| \col\nolimits\limits_{i \in \V} \left( \hat x_i (t) - x(t) \right) \right| &\leq \kappa \mathrm{e}^{-\alpha t} \left| \col\nolimits\limits_{i \in \V} \left( \hat x_i(0) - x(0) \right) \right| \\ 
          & \hspace{-5pt} + \gamma_C \infnorm{d} + \gamma_D \infnorm{w}, \ \forall t \in \mathbb R_{\geq 0},
        \end{split}
      \end{align}
\end{problem}
where $\hat x_i$ is the state estimate at the $i$-th node and $\kappa, \gamma_C, \gamma_D$ are some strictly positive, constant gains.

\begin{remark}
  Considering \eqref{eq:ISS} without noise we obtain a standard {Uniform Global Exponential Stability (UGES)} property for the estimation error.
  {Property \eqref{eq:ISS} implies robustness to small nonlinearities in the plant (included in $d_0$) and small delays in the communication (included in $w_{i,j}$). More general small nonlinear perturbations of plant \eqref{eq:plant} or asynchronous communications are covered by the intrinsic robustness results in \cite[Ch. 7]{TeelBook}, enabled by the well-posedness of our error dynamics.
  Quantitative robustness-in-the-large characterizations are more challenging objectives that will be the subject of future work.}
\end{remark}

\section{Continuous-discrete observer design}
\label{sec:observer_design}
  
\subsection{Multi-hop decomposition and necessary assumptions}
    The proposed distributed observer architecture relies on the multi-hop decomposition presented in \cite{delNozalAut19} for a discrete-time setting and whose main definitions are easily extended hereafter to the continuous-time case.

    For the linear system \eqref{eq:plant}, the $\varrho$-hop output matrix, $C_{i,\varrho}$, of agent $i \in \V$ is recursively defined as $C_{i,0} := C_i$ and
    \begin{align*}
      C_{i,\varrho} := \begin{bmatrix}
        C_{i,\varrho-1} \\ \col\nolimits\limits_{j \in \neigh} \left( C_{j,\varrho-1} \right)
      \end{bmatrix}.
    \end{align*}

In other words, $C_{i,\varrho}$ represents the outputs measured by agent $i$ and by all the agents $j$ such that in $\G$ there exists a (directed) path from $j$ to $i$ of length at most $\varrho$. Consider now, for any $i \in \V$ and any ``hop'' $\varrho \in \{0, \ldots, \ell_i\}$ of our multi-hop decomposition, the pair $\left( C_{i,\varrho}, A \right)$, where $\ell_i \in \mathbb N$ is specified in Assumption~\ref{ass:detectable} below.
    Based on the classical continuous-time observability concept, we can define the observable and unobservable sub-spaces of $\real^n$ related to $\left( C_{i,\varrho}, A \right)$, denoting them by $\mathcal O_{i,\varrho}$ and $\overline{\mathcal O}_{i,\varrho}$, respectively.
    Then, we can define the ``innovation'' matrix for agent $i$ at hop $\varrho \in \{0, \ldots, \ell_i\}$ as the matrix $W_{i,\varrho}$ whose columns generate the subspace of $\real^n$ that is observable at hop $\varrho$ and unobservable at hop $\varrho-1$.
    Namely, defining $\overline{\mathcal O}_{i,-1} := \mathbb R^n$, $W_{i,\varrho}$ is recursively defined as an orthogonal matrix such that
    \begin{align}
    \label{eq:W_i_rho}
        \begin{split}
          &\mathrm{Im} \left( W_{i,\varrho} \right) = \overline{\mathcal O}_{i,\varrho-1} \cap \mathcal O_{i,\varrho}, \quad\quad \varrho \in \{0, \ldots, \ell_i\} \\
          &\mathrm{Im} \left( W_{i,\ell_i+1} \right) = \overline{\mathcal O}_{\ell_i}.
        \end{split}
    \end{align}
    It is immediate to verify that horizontally stacking matrices $W_{i,\varrho}$ in \eqref{eq:W_i_rho}, $\varrho \in \{0, \ldots, \ell_i+1\}$, provides an orthogonal matrix $W_i$, whose image is $\mathbb R^n$.
    \begin{remark}
    \label{rmk:multihop}
      For linear systems, the observable sub-spaces related to $\left(C_{i,\varrho},A\right)$ are identical in the continuous-time and discrete-time cases.
      Thus, the multi-hop decomposition results in \cite[Lemma 3-10]{delNozalAut19} can and will be used hereafter. 
    \end{remark}
    
    Note that matrix $W_{i, \ell_i+1}$ in \eqref{eq:W_i_rho} might be non-empty, regardless of the value $\ell_i$.
    For this reason, some assumption on the unobservable modes is necessary for designing a distributed observer that solves Problem~1.
    In particular, given a desired exponential convergence rate $\alpha$ in \eqref{eq:ISS}, the least conservative assumption is based on the definition of collective $\alpha$-detectability presented in \cite{delNozalAut19}.
    
    \begin{deff}{collective $\alpha$-detectability \cite{delNozalAut19}}
    \label{def:multi-hop}
        Given $\alpha>0$, system \eqref{eq:plant} is collectively $\alpha$-detectable with respect to $\mathcal G$ if, for each $i \in \V$, there exists 
        $\ell_i\in\mathbb N$ such that pair $\left( C_{i,\ell_i}, A \right)$ is $\alpha-$detectable.
    \end{deff}
    \begin{assumption}
    \label{ass:detectable}
        System~\eqref{eq:plant} is collectively $\alpha$-detectable with $\alpha>0$ and $\ell_i$ in \eqref{eq:W_i_rho} is selected according to Definition~\ref{def:multi-hop}.
    \end{assumption}
    
    Assumption~\ref{ass:detectable} does not require any connectedness or spanning tree assumption on graph $\G$, which are typical assumptions in distributed estimation.
    Such assumptions may be conservative (see the example in \cite[Figure 1]{delNozalAut19}), while it is immediate to see that Assumption~\ref{ass:detectable} is necessary for solving Problem~\ref{problem}.

\subsection{Hybrid observer structure}
    \begin{subequations} \label{eq:observer}
    Based on the multi-hop decomposition, each observer located at node $i \in \V$ is defined by the impulsive system 
    \begin{align}
    \label{eq:obs_flow}
      & \hspace{-5pt}  \dot {\hat x}_i = A \hat x_i + W_{i,0} L_i (y_i - C \hat x_i), \quad t \notin \{k T, k \in {\mathbb N} \}, \\
    \label{eq:obs_jump} \begin{split}
          &\hspace{-5pt} {\hat x}_i^+ = \hat x_i + \sum\nolimits\limits_{\varrho=1}^{\ell_i} \sum\nolimits\limits_{j \in \neigh} W_{i,\varrho} N_{i,j,\varrho} W_{j,\varrho-1}^\top \left( y_{i,j} - \hat x_i \right), \\ 
          &\hspace{135pt}t \in \{k T, k \in \mathbb N\},
      \end{split}
    \end{align}
    \end{subequations}
    where $W_{i,\varrho}, i \in \V, \varrho \in \{0, \ldots, \ell_i\}$ are defined in \eqref{eq:W_i_rho}.
The architecture of observer \eqref{eq:observer} comprises three main parts:
\begin{enumerate}
        \item \textit{a continuous-time copy of the observed plant}, 
        \item \textit{a local output injection}, which can be performed in continuous time as well, since each sensor has access to its own measurement in continuous time,
        \item \textit{a consensus correction term}, based on the information coming from the neighbors of agent $i$. Due to the communication scheme,
        this output injection corresponds to an impulsive modification of $\hat x_i$ in the subspace generated by matrices $W_{i,\varrho}$,  $\varrho \in \{ 1,\dots,\ell_i\}$. 
    \end{enumerate}
In \eqref{eq:observer}, the information coming from the neighbors is not available at all times, which makes the impulsive architecture relevant in practical implementation.

   Observer \eqref{eq:observer} requires selecting matrices $L_i$ and $N_{i,j,\varrho}$, of appropriate dimension, for all $(i,j,\varrho)$ in $\mathcal V\times \mathcal N_i \times \{0,\dots,\ell_i\}$. Their selection is addressed in Section~\ref{sec:feasibility}.

  \subsection{Stability guarantees}
    The following property presents sufficient conditions for the observer in \eqref{eq:observer} to solve Problem~\ref{problem}.
    We will show in Section~\ref{sec:feasibility} that, under the necessary Assumption~\ref{ass:detectable}, there always exist observer gains satisfying this property.
    \begin{property}
      \label{prop:gains}
      For each agent $i$, the local observer gains $L_i$ are chosen so that
      \begin{align}
      \label{eq:gain_CT}
        \overline A_{i,0} := \left( W_{i,0}^\top A - L_i C_i \right) W_{i,0}
      \end{align}
      is Hurwitz, with spectral abscissa $\overline \alpha \leq -\alpha$.  The consensus gains $N_{i,j,\varrho}$,  for all $\varrho \in \left\{ 1, \ldots, \ell_i \right\}$, are chosen so that
      \begin{align}
      \label{eq:gain_DT}
        \hspace{-5pt} \overline A_{i,\varrho} := \mathrm{e}^{(W_{i,\varrho}^\top A W_{i,\varrho}) T} \Big( I - \Big( \sum\nolimits\limits_{j \in \neigh} N_{i,j,\varrho} W_{j,\varrho-1}^\top \Big) W_{i,\varrho} \Big)
      \end{align}
      is Schur, with spectral radius $\overline \beta \in [0, \mathrm{e}^{-\alpha T}]$.
    \end{property}
    We are now in position to state the main result, which will be proven in Section~\ref{sec:main_proof}.
    \begin{theorem}
    \label{thm:iss}
      Given $\alpha > 0$, if the gains of observer \eqref{eq:observer} satisfy Property~\ref{prop:gains}, then this observer solves Problem~\ref{problem}.
    \end{theorem}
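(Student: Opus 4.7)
\textit{Proof plan.}
My plan is to work in the multi-hop basis of each agent and build a hybrid Lyapunov function that exploits Property~\ref{prop:gains} block by block. Let $\tilde x_i := \hat x_i - x$ and, since the matrix $W_i := [W_{i,0}, \dots, W_{i,\ell_i+1}]$ is orthogonal, decompose $\tilde z_{i,\varrho} := W_{i,\varrho}^\top \tilde x_i$, $\varrho \in \{0,\dots,\ell_i+1\}$. The $A$-invariance of each observable subspace $\mathcal{O}_{i,\varrho}$, $\varrho \leq \ell_i$, makes $W_i^\top A W_i$ block upper triangular with diagonal blocks $W_{i,\varrho}^\top A W_{i,\varrho}$; the last diagonal block carries the unobservable spectrum of $(C_{i,\ell_i},A)$, which is $\alpha$-stable by Assumption~\ref{ass:detectable}. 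Combined with $W_{i,\varrho}^\top W_{i,0} = 0$ for $\varrho\geq 1$, the flow~\eqref{eq:obs_flow} in these coordinates reads
\begin{align*}
\dot{\tilde z}_{i,0} &= \overline A_{i,0}\, \tilde z_{i,0} + \textstyle\sum_{\sigma>0} H^c_{i,0,\sigma}\, \tilde z_{i,\sigma} + \Theta^c_{i,0}\, d, \\
\dot{\tilde z}_{i,\varrho} &= W_{i,\varrho}^\top A W_{i,\varrho}\, \tilde z_{i,\varrho} + \textstyle\sum_{\sigma>\varrho} H^c_{i,\varrho,\sigma}\, \tilde z_{i,\sigma} + \Theta^c_{i,\varrho}\, d, \quad \varrho \geq 1,
\end{align*}
with $\overline A_{i,0}$ matching~\eqref{eq:gain_CT} and Hurwitz with abscissa $\leq -\alpha$ by Property~\ref{prop:gains}.

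For the jumps, the recursive definition of $C_{i,\varrho}$ gives $\mathcal{O}_{j,\varrho-1} \subseteq \mathcal{O}_{i,\varrho}$ for every $j \in \neigh$, whence $W_{j,\varrho-1}^\top W_{i,\sigma} = 0$ for $\sigma > \varrho$. Substituting $\tilde x_j = W_j \tilde z_j$ and $\tilde x_i = W_i \tilde z_i$ into~\eqref{eq:obs_jump} and projecting onto $W_{i,\varrho}$ yields, for $\varrho \in \{1,\dots,\ell_i\}$,
\begin{align*}
\tilde z_{i,\varrho}^+ = \bigl(I - \textstyle\sum_{j\in\neigh} N_{i,j,\varrho} W_{j,\varrho-1}^\top W_{i,\varrho}\bigr)\tilde z_{i,\varrho} + \textstyle\sum_{\sigma<\varrho} H^d_{i,\varrho,\sigma}\,\tilde z_{i,\sigma} + \textstyle\sum_{j\in\neigh} N_{i,j,\varrho}\,\tilde z_{j,\varrho-1} + \Theta^d_{i,\varrho}\, w,
\end{align*}
while $\tilde z_{i,0}$ and $\tilde z_{i,\ell_i+1}$ are unchanged. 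Composing the ``self-loop'' jump with the free flow of length $T$ reproduces exactly $\overline A_{i,\varrho}$ in~\eqref{eq:gain_DT}, which is Schur with radius $\leq e^{-\alpha T}$.

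The core step is an ad-hoc hybrid Lyapunov construction. For the continuous modes I set $V_{i,0}(\tilde z_{i,0}) := \tilde z_{i,0}^\top P_{i,0}\tilde z_{i,0}$ with $P_{i,0}\overline A_{i,0} + \overline A_{i,0}^\top P_{i,0} \leq -2\alpha P_{i,0}$, and likewise for the autonomous $\alpha$-stable mode $\tilde z_{i,\ell_i+1}$. For each $\varrho\in\{1,\dots,\ell_i\}$ I introduce a timer $\tau\in[0,T]$ reset at every $kT$ and take $V_{i,\varrho}(\tilde z_{i,\varrho},\tau) := \tilde z_{i,\varrho}^\top P_{i,\varrho}(\tau)\tilde z_{i,\varrho}$, where $P_{i,\varrho}(\tau)$ is obtained by pulling back a Schur Lyapunov matrix $\bar P_{i,\varrho}$ for $\overline A_{i,\varrho}$ (i.e.\ $\overline A_{i,\varrho}^\top \bar P_{i,\varrho}\overline A_{i,\varrho} \leq e^{-2\alpha T}\bar P_{i,\varrho}$) through the free flow $e^{W_{i,\varrho}^\top A W_{i,\varrho}(T-\tau)}$, so that $V_{i,\varrho}$ is nonincreasing along flow and contracts by $e^{-2\alpha T}$ at each self-loop jump. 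A weighted sum $V := \sum_{i,\varrho}\mu_{i,\varrho}V_{i,\varrho}$, with positive weights chosen hierarchically to dominate both the upper-triangular flow coupling and the inter-agent jump coupling through $\tilde z_{j,\varrho-1}$, then satisfies a hybrid dissipation inequality of the form $\dot V \leq -2\alpha V + c_1 |d|^2$ along flow and $V^+ \leq V + c_2 |w|^2$ at jumps, from which a standard hybrid-ISS comparison argument (cf.\ \cite[Ch.~7]{TeelBook}) delivers~\eqref{eq:ISS}.

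The hard part is the joint calibration of the timer-dependent matrices $P_{i,\varrho}(\tau)$ and the scalar weights $\mu_{i,\varrho}$: the flow sends higher-$\varrho$ errors down into lower-$\varrho$ dynamics, while the jump sends lower-$\varrho$ errors of agent $i$ and the $(\varrho{-}1)$-th modes of its neighbors up into the $\varrho$-th jump of agent $i$. These two opposite directions of coupling must be absorbed simultaneously without degrading the prescribed rate $\alpha$ guaranteed by Property~\ref{prop:gains}.
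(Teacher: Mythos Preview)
Your overall strategy---pass to multi-hop coordinates, use a timer-pulled-back quadratic for the intermediate hops, and assemble a hybrid Lyapunov function---is the right intuition, and is close in spirit to the paper. However, the proposal contains two concrete errors that, as written, make the argument break down.

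\emph{(a) The flow cascade direction is reversed.} By $A$-invariance of the unobservable subspaces $\overline{\mathcal O}_{i,\varrho}$ one has $W_{i,\varrho}^\top A W_{i,\sigma}=0$ for $\sigma>\varrho$, so $\dot{\tilde z}_{i,\varrho}$ is driven by $\tilde z_{i,\sigma}$ with $\sigma\le\varrho$, not $\sigma>\varrho$. In particular $\dot{\tilde z}_{i,0}$ depends on $\tilde z_{i,0}$ alone (no cross terms), contrary to what you wrote. Thus both the flow coupling and the jump coupling point the \emph{same} way (from low $\varrho$ to high $\varrho$); the ``two opposite directions'' difficulty you flag at the end does not exist. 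This also means the flow diagonal block for $\varrho=0$ is exactly $\overline A_{i,0}$ in \eqref{eq:gain_CT}, as the paper uses.

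\emph{(b) The claimed dissipation inequalities are inconsistent.} You state that each $V_{i,\varrho}$, $\varrho\in\{1,\dots,\ell_i\}$, is merely \emph{nonincreasing} along flow (which is correct for the timer pullback you describe), yet then claim $\dot V\le -2\alpha V + c_1|d|^2$ for the weighted sum. Those two statements are incompatible: if a summand is only constant along flow, no positive weight makes the total sum decay at rate $2\alpha$ along flow. Conversely, for $\varrho=0$ and $\varrho=\ell_i+1$ the jump is the identity, so those summands do not contract at jumps. The correct split is therefore mixed: some blocks decay along flow only, others across jumps only, and the weighted-sum bookkeeping must reflect this.

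The paper sidesteps both issues by \emph{not} working block by block. It stacks all $\varepsilon_{i,\varrho}$ into a single vector $\varepsilon$, observes that the resulting flow matrix $A_\varepsilon$ and jump matrix $J_\varepsilon$ are \emph{simultaneously} block upper triangular (in the same ordering), hence so is the one-period map $e^{A_\varepsilon T}J_\varepsilon$; its spectrum is then the union of the spectra of $e^{\overline A_{i,0}T}$, $\overline A_{i,\varrho}$, and $e^{W_{i,\ell_i+1}^\top A W_{i,\ell_i+1}T}$, all with radius $\le e^{-\alpha T}$ by Property~\ref{prop:gains} and Assumption~\ref{ass:detectable}. A \emph{single} (generally non-block-diagonal) $P>0$ certifying $J_\varepsilon^\top e^{A_\varepsilon^\top T}P\,e^{A_\varepsilon T}J_\varepsilon<\eta^2P$ then yields the nonsmooth Lyapunov function $V(\varepsilon,\tau)=\sqrt{\varepsilon^\top e^{A_\varepsilon^\top(T-\tau)}P\,e^{A_\varepsilon(T-\tau)}\varepsilon}$, which is essentially constant along flow (all decay is concentrated at jumps) and absorbs every cross term automatically. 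If you correct (a) and (b), your cascade-of-weights approach can be made to work, but the paper's global-$P$ construction is considerably shorter and avoids the weight calibration entirely.
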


  \subsection{Feasibility guarantees and design guidelines}
  \label{sec:feasibility}
    Following \cite[Th.14]{delNozalAut19}, we show next that Assumption~\ref{ass:detectable} is not only necessary, but also sufficient for solving Problem~\ref{problem}.
    \begin{theorem}
    \label{thm:feasibility}
      Under Assumption~\ref{ass:detectable}, there always exist gains $L_i$ and $N_{i,j,\varrho}$ satisfying Property~\ref{prop:gains}.
    \end{theorem}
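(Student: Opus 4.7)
The plan is to treat separately the continuous-time gain $L_i$ and the jump gains $N_{i,j,\varrho}$ required by Property~\ref{prop:gains}. For the former, I exploit that $W_{i,0}$ is by \eqref{eq:W_i_rho} an orthonormal basis of the $A$-invariant subspace $\mathcal O_{i,0}$, so that $AW_{i,0} = W_{i,0}(W_{i,0}^\top A W_{i,0})$. Consequently $\overline A_{i,0}$ in \eqref{eq:gain_CT} coincides with the closed-loop matrix of the observable Kalman sub-block $(C_i W_{i,0},\,W_{i,0}^\top A W_{i,0})$ of $(C_i,A)$, which is observable by construction. Standard pole placement then yields an $L_i$ assigning the spectrum of $\overline A_{i,0}$ anywhere in $\mathbb{C}$, in particular with abscissa at most $-\alpha$; the case $\mathcal O_{i,0}=\{0\}$ is vacuous.

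For the jump gains, fix $i$ and $\varrho\in\{1,\ldots,\ell_i\}$, and stack the neighbor bases as $V_i := [W_{j,\varrho-1}]_{j\in\mathcal N_i}$. I aim for the deadbeat choice $\bigl(\sum_{j\in\mathcal N_i} N_{i,j,\varrho}\, W_{j,\varrho-1}^\top\bigr) W_{i,\varrho} = I$, which forces $\overline A_{i,\varrho}$ in \eqref{eq:gain_DT} to vanish and hence trivially satisfies the Schur requirement. Writing this as $M\, V_i^\top W_{i,\varrho} = I$ with $M$ partitioned as $[N_{i,j,\varrho}]_{j\in\mathcal N_i}$, the existence of such an $M$ is equivalent to $V_i^\top W_{i,\varrho}$ having full column rank, i.e., to $\mathcal S_{i,\varrho} \cap \bigl(\sum_{j\in\mathcal N_i}\mathcal S_{j,\varrho-1}\bigr)^\perp = \{0\}$, where $\mathcal S_{j,\varrho'} := \mathrm{Im}(W_{j,\varrho'})$. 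Establishing this rank condition is the crux of the argument.

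To prove it, take $v\in\mathcal S_{i,\varrho}$. From the block structure of $C_{i,\varrho}$ one has $\mathcal O_{i,\varrho} = \mathcal O_{i,\varrho-1} + \sum_{j\in\mathcal N_i}\mathcal O_{j,\varrho-1}$; together with the orthogonal decomposition $\mathcal O_{j,\varrho-1} = \mathcal O_{j,\varrho-2}\oplus\mathcal S_{j,\varrho-1}$ and the inclusion $\mathcal O_{j,\varrho-2}\subseteq\mathcal O_{i,\varrho-1}$ (since $C_{j,\varrho-2}$ is a block of $C_{i,\varrho-1}$ when $j\in\mathcal N_i$), this lets me write $v = u + \sum_j s_j$ with $u\in\mathcal O_{i,\varrho-1}$ and $s_j\in\mathcal S_{j,\varrho-1}$. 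Projecting onto $\overline{\mathcal O}_{i,\varrho-1}$ and using $v\perp\mathcal O_{i,\varrho-1}$ then yields $v = \sum_j \Pi_{\overline{\mathcal O}_{i,\varrho-1}} s_j$. If in addition $v$ were orthogonal to every $\mathcal S_{k,\varrho-1}$, self-adjointness of $\Pi$ and $\Pi v = v$ would give $\|v\|^2 = \sum_j\langle v,s_j\rangle = 0$, forcing $v=0$. The main obstacle is exactly this geometric rank argument; once it is in hand, the required $N_{i,j,\varrho}$ are obtained by partitioning any left inverse of $V_i^\top W_{i,\varrho}$ conformally with $V_i$, and by Remark~\ref{rmk:multihop} the corresponding decomposition identities from \cite[Lemma 3--10, Th.~14]{delNozalAut19} apply unchanged in our continuous-time setting.
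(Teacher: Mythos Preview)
Your argument is essentially correct and, at its core, coincides with the paper's: both proofs hinge on showing that the matrix $\overline C_{i,\varrho}:=\col_{j\in\neigh}(W_{j,\varrho-1}^\top)\,W_{i,\varrho}$ (your $V_i^\top W_{i,\varrho}$) has full column rank. The paper obtains this from \cite[Lemma~3(iii)]{delNozalAut19}, which gives the stronger inclusion $\mathrm{Im}(W_{i,\varrho})\subseteq\sum_{j\in\neigh}\mathrm{Im}(W_{j,\varrho-1})$, and then invokes observability of $(\overline C_{i,\varrho},E_{i,\varrho})$ to do arbitrary pole placement. You instead prove the weaker (but sufficient) orthogonality statement $\mathcal S_{i,\varrho}\cap\bigl(\sum_j \mathcal S_{j,\varrho-1}\bigr)^\perp=\{0\}$ from scratch, and go straight for the deadbeat choice $\overline A_{i,\varrho}=0$, which bypasses the observability/PBH step altogether. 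Your route is more self-contained and slightly more elementary; the paper's route is more flexible since it yields any desired spectral radius, not just zero.

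One small slip to fix: the claim $A\,W_{i,0}=W_{i,0}(W_{i,0}^\top A W_{i,0})$ is false in general, because the observable subspace $\mathcal O_{i,0}$ is $A^\top$-invariant, not $A$-invariant (it is the \emph{un}observable subspace that is $A$-invariant). Fortunately you do not actually need this identity: $\overline A_{i,0}=W_{i,0}^\top A W_{i,0}-L_i C_i W_{i,0}$ follows directly from the definition, and the observability of $(C_iW_{i,0},\,W_{i,0}^\top A W_{i,0})$ is exactly the standard fact that the top-left block of the (orthogonal) Kalman observability decomposition is observable. Drop the $A$-invariance claim and your $L_i$ argument is clean.
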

    \begin{proof}
      The existence of matrices $L_i$ is trivial, as \eqref{eq:gain_CT} considers only the locally observable modes.
      Concerning the matrices $N_{i,j,\varrho}$, we can rewrite each matrix $\overline{A}_{i,\varrho}$ in \eqref{eq:gain_DT} as
      \begin{align*}
        \overline{A}_{i,\varrho} &= E_{i,\varrho} - \overline N_{i,\varrho} \overline C_{i,\varrho},
      \end{align*}
      where $E_{i,\varrho} := \mathrm{e}^{W_{i,\varrho}^\top A W_{i,\varrho} T}$, {$\overline N_{i,\varrho} := E_{i,\varrho} \col\nolimits\limits_{j \in \neigh} \left( N_{i,j,\varrho}^\top \right)^\top$} and $C_{i,\varrho} := \col\nolimits\limits_{j \in \neigh} \left( W_{j,\varrho-1}^\top \right) W_{i,\varrho}$. Using this structure, we know from linear systems theory that, if the pair $\left( \overline C_{i,\varrho}, E_{i,\varrho} \right)$ is observable, then it is possible to find $\overline N_{i,\varrho}$ that places the eigenvalues of $\overline{A}_{i,\varrho}$ anywhere in the unit circle.
      Note also that any selection of $\overline N_{i,\varrho}$ corresponds to a unique selection of the gains $N_{i,j,\varrho}$ for all $j \in \neigh$, because $E_{i,\varrho}$ is non singular for any $T>0$.
      We then complete the proof by showing observability of $\left( \overline C_{i,\varrho}, E_{i, \varrho} \right)$.

      According to the PBH test for observability (see \cite[Th.15.9]{HespanhaBook}), the pair $\left( \overline C_{i,\varrho}, E_{i,\varrho} \right)$ is observable if and only if $
        \mathrm{rank} \left( \left[\begin{smallmatrix}
          E_{i,\varrho} - \lambda I \\ \overline C_{i,\varrho} 
        \end{smallmatrix}\right] \right) = n_{i,\varrho}, \; \forall \lambda \in \mathbb C,$
      where the size $n_{i,\varrho}$ of $E_{i,\varrho}$ corresponds to the number of columns of $W_{i,\varrho}$.
      In turn, this test is true if $\overline C_{i,\varrho}$ is full column rank.
      From item (iii) of \cite[Lemma 3]{delNozalAut19} we know that the image of $W_{i,\varrho}$ is a subset of the sum of the images of $W_{j,\varrho-1}, j \in \neigh$, or equivalently
      \begin{align}
      \label{eq:WW}
        W_{i,\varrho} = \col\nolimits\limits_{j \in \neigh} \left( W_{j,\varrho-1}^\top \right)^\top Q,
      \end{align}
      where $Q$ is some selection matrix.
      Since $W_{i,\varrho}$ is full column rank by definition, we may use \eqref{eq:WW} to obtain
      $Q^\top \overline C_{i,\varrho} 
        = Q^\top \col\nolimits\limits_{j \in \neigh} \left( W_{j,\varrho-1}^\top \right) W_{i,\varrho} = W_{i,\varrho}^\top W_{i,\varrho}$,
      which implies
      $  \mathrm{rank} \left( \overline C_{i,\varrho} \right) \geq \mathrm{rank} \left( Q^\top \overline C_{i,\varrho} \right) = n_{i,\varrho}$,
      and since $\overline C_{i,\varrho}$ has exactly $n_{i,\varrho}$ columns, the equality is immediately obtained, thus concluding the proof.
    \end{proof}
    
    \begin{remark}
      A good practical rule is to tune the observer gains so that $\overline \alpha \ll -\alpha$ while $\overline \beta$ is closer to $\mathrm{e}^{-\alpha T}$. Then the local observation errors converge to zero faster than the consensus errors, thus improving the transient behaviour.
    \end{remark}

  \section{Error dynamics}

    Define the estimation error of each agent as
    \begin{align}
      \label{eq:error}
      e_i := x - \hat x_i, \quad i \in \V,
    \end{align}
    and apply the multi-hop decomposition to the error dynamics, which yields, for each $i \in \V$ and each $\varrho \in \{0, \ldots, \ell_i + 1\}$
    \begin{align}
      \label{eq:error_multihop}
      \varepsilon_{i,\varrho} := W_{i,\varrho}^\top e_i \quad\longrightarrow\quad e_i := \sum\nolimits\limits_{r = 0}^{\ell_i+1} W_{i,r} \varepsilon_{i,r}.
    \end{align}
    
    Define also the index sets
    \begin{align}
      \label{eq:V_rho}
      \V_\varrho := \left\{ i \in \V : \ell_i + 1 \geq \varrho \right\}, \quad \varrho \in \{0, \ldots, \overline \ell \},
    \end{align}
    where $\overline \ell := \max\nolimits\limits_i \{ \ell_i + 1 \}$.
    Then, for each $\varrho \in \{0, \ldots \overline \ell \}$, we can denote with $\varepsilon_{\varrho}$ the vector obtained by stacking all the transformed error coordinates at hop $\varrho$, and with $\varepsilon$ the vector obtained stacking all the $\varepsilon_\varrho$, namely
    \begin{align}
    \label{eq:epsilon_rho}
            \varepsilon_{\varrho} := \col\nolimits\limits_{i \in \V_\varrho} \left( \varepsilon_{i,\varrho} \right), \quad\quad
            \varepsilon := \col\nolimits\limits_{\varrho \in \{ 0, \ldots, \overline \ell \}} \left( \varepsilon_\varrho \right) \in \real^{n_\varepsilon}.
    \end{align}

    Given these definitions, the error dynamics can be described by means of a hybrid dynamical system.
    To this end, we augment the error state $\varepsilon$ with a timer element $\tau$, which will be used to trigger periodic jumps, representing the sampled-data exchange of information.
    In particular, we prove in the sequel that the error dynamics results in
    \begin{align}
      \label{eq:error_hs}
      \hspace{-10pt} \begin{array}{ll}
        \begin{bmatrix}
          \dot \varepsilon \\ \dot \tau
        \end{bmatrix} = \begin{bmatrix}
          A_\varepsilon \varepsilon + R d \\ 1
        \end{bmatrix}, &(\varepsilon, \tau) \in \C, \\
        \begin{bmatrix}
          \varepsilon^+ \\ \tau^+
        \end{bmatrix} = \begin{bmatrix}
          J_\varepsilon \varepsilon + S w \\ 0
        \end{bmatrix}, &(\varepsilon, \tau) \in \D, \\
      \end{array}
    \end{align}
    where the state evolves in $X := \real^{n_\varepsilon} \times \left[ 0, T \right]$ and the flow and jump sets are defined as
    \begin{gather}
      \label{eq:error_sets}
      \C := X, \quad\quad \D := \left\{ (\varepsilon, \tau) \in X : \tau = T \right\}.
    \end{gather}
    The matrices in \eqref{eq:error_hs} are defined as
    \begin{gather}
    \label{eq:upper_triang_matrices}
      A_\varepsilon := \smallmat{
        A_{\overline \ell} &&&& \\
        & A_{\overline \ell - 1}&&\star& \\
        &&\ddots&& \\
        &0&&A_1& \\
        &&&&A_0
      }, \quad J_\varepsilon := \smallmat{
        \Delta_{\overline \ell} &&&& \\
        & \Delta_{\overline \ell - 1}&&\star& \\
        &&\ddots&& \\
        &0&&\Delta_1& \\
        &&&&\Delta_0
      }, \\ \label{eq:noise_matrices}
      R := \col\nolimits\limits_{\varrho \in \{0, \ldots, \overline \ell\}} \left( R_\varrho \right), \quad
      S := \col\nolimits\limits_{\varrho \in \{0, \ldots, \overline \ell\}} \left( S_\varrho \right),
    \end{gather}
    where $\star$ represents some possibly non-zero terms.
    Using the definition of $\V_\varrho$ in \eqref{eq:V_rho} and adopting the convention that the unobservable modes of node $i$ correspond to $N_{i,j,\ell_i+1} := 0$ for all $j \in \neigh$, the entries of \eqref{eq:upper_triang_matrices}-\eqref{eq:noise_matrices} are given by
    \begin{subequations}
    \label{eq:matrix_entries}
      \begin{align}
            &\hspace{-7pt} A_\varrho := \begin{cases}
              \diag\nolimits\limits_{i \in \V} \left(W_{i,0}^\top A W_{i,0} - L_i C_i W_{i,0} \right), &\mbox{if } \varrho = 0, \\
              \diag\nolimits\limits_{i \in \V_\varrho} \left(W_{i,\varrho}^\top A W_{i,\varrho} \right), &\hspace{-35pt} \mbox{if } \varrho \in \{ 1, \ldots, \overline \ell \},
            \end{cases} \\ 
            &\hspace{-7pt} R_\varrho := \begin{cases}
              \begin{bmatrix}
                \diag\nolimits\limits_{i \in \V} \left( -L_i \right) \; \big| \; \col\nolimits\limits_{i \in \V} \left( W_{i,0}^\top \right) 
              \end{bmatrix}, &\mbox{if } \varrho = 0, \\
              \begin{bmatrix}
                \; \boldsymbol{0} \; \big| \; \col\nolimits\limits_{i \in \V_\varrho} \left( W_{i,\varrho}^\top \right)
              \end{bmatrix}, &\hspace{-30pt} \mbox{if } \varrho \in \{ 1, \ldots, \overline \ell \},
            \end{cases} \\
            &\hspace{-7pt} \Delta_\varrho := \begin{cases}
              I, \hspace{44mm} \mbox{if } \varrho = 0, \\
              \diag\nolimits\limits_{i \in \V_\varrho} \left( I - \left( \sum\nolimits\limits_{j \in \neigh} N_{i,j,\varrho} W_{j,\varrho-1}^\top \right) W_{i,\varrho} \right), \\
              \hspace{35mm} \mbox{if } \varrho \in \{ 1, \ldots, \overline \ell \}.
            \end{cases} 
          \end{align}
           \begin{align}
            &\hspace{-7pt} S_\varrho := \begin{cases}
              \boldsymbol{0}, \hspace{120pt} \mbox{if } \varrho = 0, \\
              \col\nolimits\limits_{i \in \V_\varrho} \left( S_{i,\varrho} \right), \hspace{50pt} \mbox{if } \varrho \in \{ 1, \ldots, \overline \ell \},
            \end{cases} \\
            &\hspace{-7pt} S_{i,\varrho} = -\begin{bmatrix}
              \boldsymbol{0} \quad \col\nolimits\limits_{j \in \neigh} \left( W_{j,\varrho-1} N_{i,j,\varrho}^\top \right)^\top \quad \boldsymbol{0}
            \end{bmatrix},
          \end{align}
    \end{subequations}
    where the only non-zero element of the block vector $S_{i, \varrho}$ is in position $i$.
    We can finally state the following lemma, whose proof can be found in the appendix.
    \begin{lemma}
      \label{lem:error_dyn}
      Consider the transformed error coordinates in \eqref{eq:error_multihop}, the vector $\varepsilon$ defined in \eqref{eq:epsilon_rho} and the noise vectors $d$ and $w$ in \eqref{eq:noise}.
      Then, $\varepsilon$ evolves according to \eqref{eq:error_hs}-\eqref{eq:matrix_entries}.
    \end{lemma}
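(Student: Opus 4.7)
The plan is to derive the dynamics of $\varepsilon$ in two stages, flow and jump, agent-by-agent and hop-by-hop, and then verify that the aggregated block pattern matches \eqref{eq:upper_triang_matrices}--\eqref{eq:matrix_entries}. The analysis is essentially linear-algebraic, exploiting orthogonality of the columns of each $W_i$ together with the inclusion properties of the multi-hop observable/unobservable subspaces recalled in Remark~\ref{rmk:multihop}.

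For the flow, I would first differentiate $e_i$ along \eqref{eq:plant} and \eqref{eq:obs_flow} to obtain $\dot e_i = (A - W_{i,0} L_i C_i) e_i + d_0 - W_{i,0} L_i d_i$, and then project via $\dot\varepsilon_{i,\varrho} = W_{i,\varrho}^\top \dot e_i$ after substituting $e_i = \sum_r W_{i,r} \varepsilon_{i,r}$. Orthogonality of $W_i$ eliminates the output-injection term whenever $\varrho \geq 1$, leaving only the coupling through $W_{i,\varrho}^\top A W_{i,r}$. For $\varrho = 0$, the inclusion chain $\mathrm{Im}(W_{i,r}) \subseteq \overline{\mathcal O}_{i,r-1} \subseteq \overline{\mathcal O}_{i,0} \subseteq \ker C_i$ (valid for $r \geq 1$) removes the cross-hop output-injection contributions, yielding the diagonal entry $W_{i,0}^\top A W_{i,0} - L_i C_i W_{i,0}$ and the noise coefficient $[-L_i\mid W_{i,0}^\top]$ announced in \eqref{eq:matrix_entries}. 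The triangular pattern of $A_\varepsilon$ then follows from $A$-invariance of $\overline{\mathcal O}_{i,r-1} = \mathrm{span}(W_{i,r},\ldots,W_{i,\ell_i+1})$, which forces $A W_{i,r}$ to stay in this span and hence $W_{i,\varrho}^\top A W_{i,r} = 0$ for all $\varrho < r$; combined with the reverse indexing of $\col_\varrho(\varepsilon_\varrho)$, this reproduces the structure of \eqref{eq:upper_triang_matrices} with the $\star$ entries supplied by the remaining $W_{i,\varrho}^\top A W_{i,r}$ for $r < \varrho$.

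For the jump, I would rewrite the transmission residual as $y_{i,j} - \hat x_i = e_i - e_j + w_{i,j}$ (using $\hat x_k = x - e_k$), substitute in \eqref{eq:obs_jump} to get $e_i^+ = e_i - \sum_{\varrho',j} W_{i,\varrho'} N_{i,j,\varrho'} W_{j,\varrho'-1}^\top (e_i - e_j + w_{i,j})$, and project by $W_{i,\varrho}^\top$. Orthogonality of $W_i$ again collapses the outer sum to $\varrho'=\varrho$. Combining $\varepsilon_{i,\varrho}$ with the self-agent contribution $-\sum_j N_{i,j,\varrho} W_{j,\varrho-1}^\top W_{i,\varrho}\varepsilon_{i,\varrho}$ reproduces $\Delta_\varrho$; the cross-agent contribution $\sum_j N_{i,j,\varrho} W_{j,\varrho-1}^\top e_j$ collapses via $W_{j,\varrho-1}^\top W_{j,r} = \delta_{r,\varrho-1}I$ to $\sum_j N_{i,j,\varrho} \varepsilon_{j,\varrho-1}$, producing the cross-agent $\star$ coupling at block position $(\varrho,\varrho-1)$; and the $w_{i,j}$ term supplies $S_{i,\varrho}$. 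The convention $N_{i,j,\ell_i+1}=0$ makes $\Delta_\varrho$ reduce to the identity in the unobservable hop, keeping $\varepsilon_{i,\ell_i+1}$ unchanged by the jump.

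The step requiring the most care will be showing that the blocks below the diagonal of $J_\varepsilon$ vanish, i.e.\ that $W_{j,\varrho-1}^\top W_{i,r} = 0$ for every $j \in \neigh$ whenever $r > \varrho$. I plan to use the inclusion chain
\begin{align*}
\mathrm{Im}(W_{j,\varrho-1}) \subseteq \mathcal O_{j,\varrho-1} \subseteq \mathcal O_{i,\varrho} = (\overline{\mathcal O}_{i,\varrho})^\perp \subseteq (\overline{\mathcal O}_{i,r-1})^\perp,
\end{align*}
where the middle inclusion follows from the recursive definition of $C_{i,\varrho}$ whose rows include those of each $C_{j,\varrho-1}$ with $j \in \neigh$, so that observable subspaces can only grow with each hop, and the last inclusion uses monotonicity $\overline{\mathcal O}_{i,r-1} \subseteq \overline{\mathcal O}_{i,\varrho}$ for $r-1 \geq \varrho$. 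The remainder of the proof is bookkeeping to stack the per-agent expressions according to the $\V_\varrho$ ordering and confirm the block forms of $R$ and $S$ in \eqref{eq:matrix_entries}.
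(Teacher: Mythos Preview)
Your proposal is correct and follows the same route as the paper's appendix proof: derive the flow/jump dynamics of $e_i$, project by $W_{i,\varrho}^\top$, and use orthogonality of $W_i$ together with the multi-hop subspace inclusions to obtain the block upper-triangular structure of $A_\varepsilon$ and $J_\varepsilon$. The only difference is cosmetic: where the paper invokes \cite[Lemma~3 and Lemma~10]{delNozalAut19} to justify $W_{i,\varrho}^\top A W_{i,r}=0$ for $r>\varrho$ and $W_{j,\varrho-1}^\top W_{i,r}=0$ for $j\in\neigh$, $r>\varrho$, you spell these facts out directly via the inclusion chains $\mathrm{Im}(W_{i,r})\subseteq\overline{\mathcal O}_{i,r-1}$ (with $A$-invariance) and $\mathrm{Im}(W_{j,\varrho-1})\subseteq\mathcal O_{j,\varrho-1}\subseteq\mathcal O_{i,\varrho}=\overline{\mathcal O}_{i,\varrho}^\perp$, which is a more self-contained presentation of the same argument.
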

    Note that 
    the flow and jump sets in \eqref{eq:error_sets} overlap on their boundaries.
    This is typical in well-posed hybrid systems \cite[Ch. 6]{TeelBook} where
$\mathcal C$ and $\mathcal D$ are closed. Well-posedness ensures robustness of stability \cite[Ch. 7]{TeelBook}, and even though having overlapping $\mathcal C$ and $\mathcal D$ might generate non-unique solutions, this is not a concern in the hybrid framework where Lyapunov-based proofs apply to all solutions. Moreover, uniqueness of solutions can be proved for the system \eqref{eq:error_hs}-\eqref{eq:matrix_entries} and a similar situation occurs in the famous bouncing ball example \cite[Ex 1.1]{TeelBook}.

  \section{Numerical simulations}
    Consider the continuous-time system \eqref{eq:plant} with
    \begin{align*}
      A = \left[\begin{smallmatrix}
        0&1&0&0 \\ -1&0&0&0 \\ 0&0&0&2 \\ 0&0&-2&0
      \end{smallmatrix}\right], \quad\quad
      C_i = e_i^\top, \quad i \in \left\{1, \ldots, 4 \right\},
    \end{align*}
    where $e_i$ represents the $i$-th vector of the Euclidean basis, i.e. a vector with all zeros and a one in position $i$.
    The agents are interconnected with a directed ring graph, i.e. agent 1 sends information to agent 2, 2 to 3, and so on.
    
    \begin{figure}[hbt]
        \centering
        \includegraphics[width = 0.8\columnwidth]{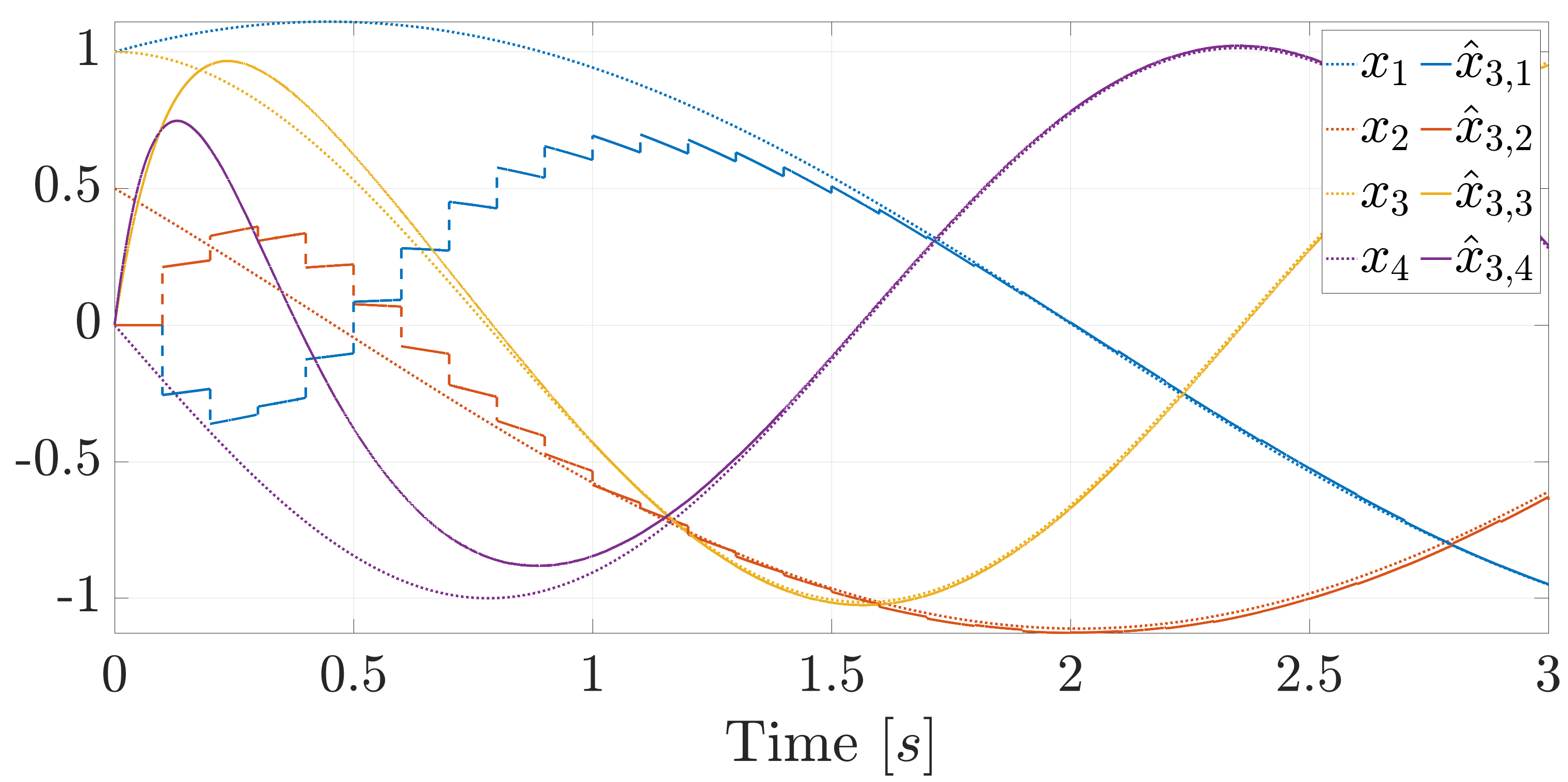}
        \caption{Evolution of the estimates of agent 3, compared with the plant states.}
        \label{fig:plots}
    \end{figure}

    \begin{figure}[hbt]
        \centering
        \includegraphics[width = 0.8\columnwidth]{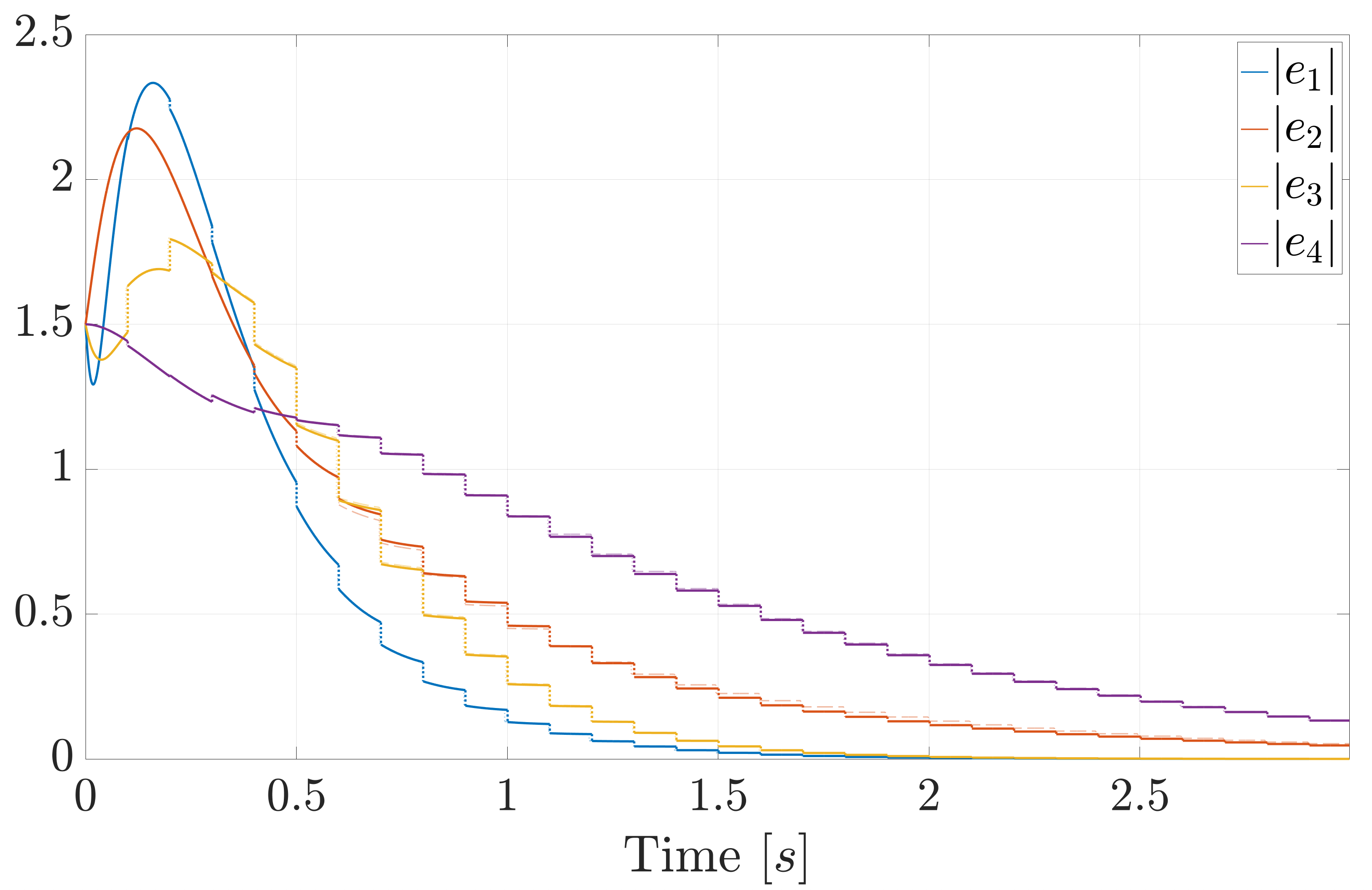}
        \caption{Evolution of the estimation error norms: comparison between the nominal case (opaque line) and the system in presence of timer jittering (transparent line).}
        \label{fig:jittering}
    \end{figure}

    \begin{figure}[hbt]
        \centering
        \includegraphics[width = 0.8\columnwidth]{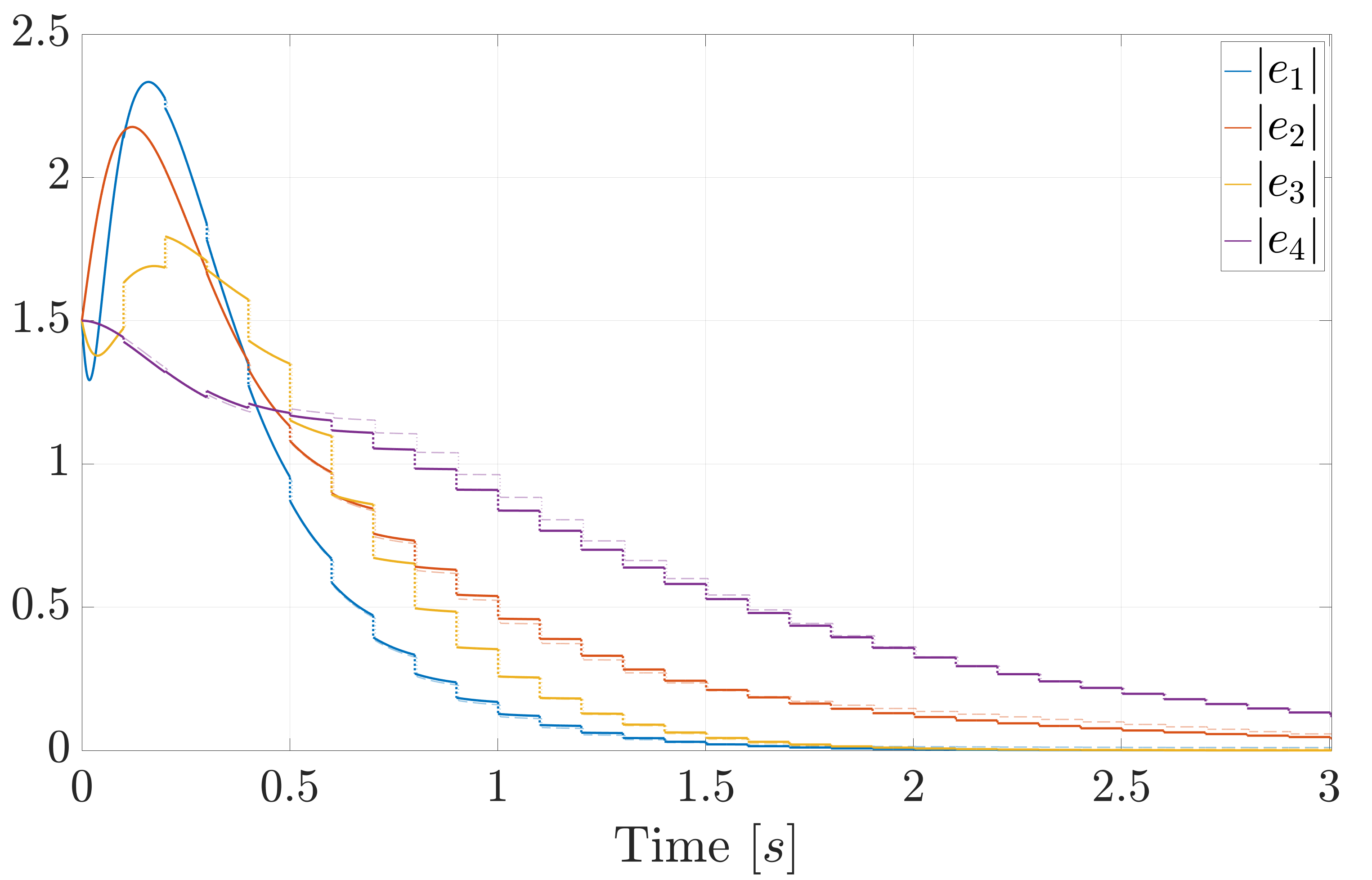}
        \caption{Evolution of the estimation error norms: comparison between the nominal case (opaque line) and the system in presence of transmission delays (transparent line).}
        \label{fig:delay}
    \end{figure}

    \begin{figure}[hbt]
        \centering
        \includegraphics[width = 0.8\columnwidth]{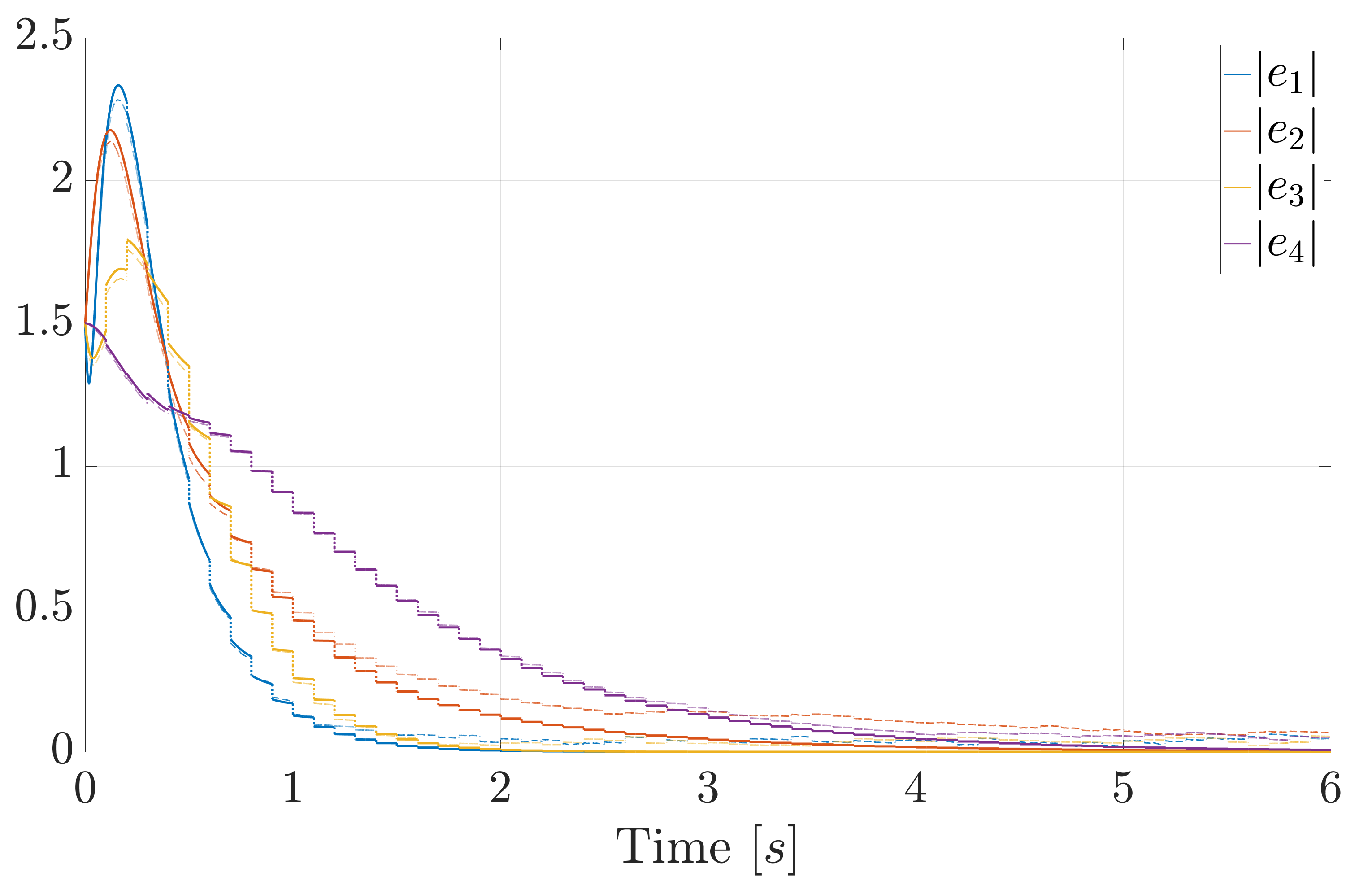}
        \caption{Evolution of the estimation error norms: comparison between the nominal case (opaque line) and the system in presence of generic disturbances $d$ and $w$, as discussed in this paper (transparent line).}
        \label{fig:noise}
    \end{figure}

    We report the simulation results for the proposed observer hereafter.
    For this simulation, we selected $T = 0.1s$ and $\alpha = 1$.
    The observer gains were chosen so that Property~\ref{prop:gains} was satisfied with $\overline \alpha = -5 \alpha$ and $\overline \beta = \mathrm{e}^{-\alpha T}$.
    The disturbances $d$ and $w$ were simulated as random signals, with $\infnorm{d} = 0.04$ and $\infnorm{w} = 0.02$. In Figure~\ref{fig:plots}, we can see the comparison among the state estimates of agent 3 and the plant states.
    We can see that the estimates of the locally observable modes, namely $\hat x_{3,3}$ and $\hat x_{3,4}$, converge continuously to the corresponding plant states.
    On the other hand, convergence of the locally unavailable states $\hat x_{3,1}$ and $\hat x_{3,2}$ is achieved through the jumps emerging from the periodic information exchange with the neighbouring agents.

    In Figures~\ref{fig:jittering}-\ref{fig:noise}, the evolution of the error estimates in the nominal case is compared with three different non-ideal cases.
    In Figure~\ref{fig:jittering}, we show the result of a simulation with timer jittering, i.e. each agent is governed by its own timer state, and the information is transmitted when the value of the timer is in the interval $[T - \epsilon_\tau, T + \epsilon_\tau]$. In this simulation, we selected $\epsilon_\tau = T/10 = 10^{-2}$s.
    Figure~\ref{fig:delay} shows the estimation errors in the presence of communication delays, in particular the update of each agent happens with a fixed delay of $\delta = 5 \cdot 10^{-3}$s.
    Lastly, in Figure~\ref{fig:noise} we compare the nominal case with the system affected by nonzero norm-bounded Gaussian selections of $d$ and $w$, as per our main assumption; the selected values of the infinity norms are the same as the ones used for the simulation that produced Figure~\ref{fig:plots}.

\section{Proof of Theorem~\ref{thm:iss}}
  \label{sec:proof}
  \subsection{Nonsmooth Lyapunov function}
    Consider the Lyapunov function candidate
    \begin{align}
      \label{eq:lyap_fun}
      \begin{split}
        V(\varepsilon, \tau) &\!:= 
        \!\sqrt{\!\varepsilon^{\!\top} \overline P(\tau) \varepsilon} 
        := 
        \sqrt{\varepsilon^\top \mathrm{e}^{A_\varepsilon^\top (T - \tau)} P \mathrm{e}^{A_\varepsilon (T - \tau)} \varepsilon},
      \end{split}
    \end{align}
    with  $P\! >\! 0$  and  $A_\varepsilon$ defined in \eqref{eq:upper_triang_matrices} and introduce the attractor
    \begin{align}
      \mathcal A := \left\{ (\varepsilon, \tau) \in X : \varepsilon = 0 \right\},
    \end{align}
Notice that $V$ is smooth outside $\mathcal A$, so we can define its gradient $\nabla V (\varepsilon, \tau)$ for all $(\varepsilon, \tau) \in X \setminus \mathcal A$.

  \subsection{Properties of $V$ and some useful results}
    We will now present some properties of triangular and exponential matrices, needed to prove the main stability result.
    Their proofs are trivial applications of linear algebra concepts, thus they are not reported.
    \begin{lemma}
    \label{lem:properties}
      Consider $A,B \in \real^{n \times n}$.
      The following holds
      \begin{enumerate}[(i)]
        \item if $A, B$ are upper triangular, then $AB$ is also upper triangular, and $(ab)_{ii} = a_{ii} b_{ii}$; \label{lem:upper_triangular_product}
        \item if $A$ is upper triangular and $B = \mathrm{e}^{A t}$, then $B$ is also upper triangular, and $b_{ii} = \mathrm{e}^{a_{ii} t}$ \label{lem:upper_triangular_exp}
        \item if $\lambda$ is an eigenvalue of $A$, $\mathrm{e}^{\lambda t}$ is an eigenvalue of $\mathrm{e}^{A t}$. \label{lem:exp_eig}
      \end{enumerate}
    \end{lemma}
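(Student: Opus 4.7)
The plan is to prove the three items independently, each by a short direct computation, since they are standard linear-algebra facts and the paper itself flags them as ``trivial.'' The core technical content sits in item (i); items (ii) and (iii) follow by applying (i) to the terms of the matrix exponential series.

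For item (i), I would argue from the definition of matrix multiplication. Let $A,B$ be $n\times n$ upper triangular so that $a_{ik}=0$ whenever $i>k$ and $b_{kj}=0$ whenever $k>j$. Writing $(AB)_{ij}=\sum_{k=1}^n a_{ik}b_{kj}$, a nonzero term requires simultaneously $k\ge i$ and $k\le j$. If $i>j$ no such $k$ exists, so $(AB)_{ij}=0$ and $AB$ is upper triangular. When $i=j$ the two constraints force $k=i$, leaving only the single summand $a_{ii}b_{ii}$, which yields the claimed formula for the diagonal entries.

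For item (ii), I would combine (i) with the series representation $e^{At}=\sum_{k=0}^{\infty}\frac{t^k}{k!}A^k$. An easy induction using (i) shows that every power $A^k$ is upper triangular with diagonal entries $(A^k)_{ii}=a_{ii}^k$. Since the set of upper triangular matrices is a closed linear subspace of $\mathbb{R}^{n\times n}$, the series limit $e^{At}$ is upper triangular, and its $(i,i)$ entry equals $\sum_{k=0}^{\infty}\frac{t^k a_{ii}^k}{k!}=e^{a_{ii}t}$. For item (iii), I would take an eigenvector $v$ with $Av=\lambda v$, iterate to get $A^k v=\lambda^k v$, and evaluate $e^{At}v=\sum_{k\ge 0}\frac{t^k}{k!}A^k v=\Bigl(\sum_{k\ge 0}\frac{(\lambda t)^k}{k!}\Bigr)v=e^{\lambda t}v$, so $e^{\lambda t}$ is an eigenvalue of $e^{At}$.

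The only subtle point worth mentioning is the interchange of the series and the matrix product in item (ii), which is justified by absolute convergence of the exponential series in any submultiplicative matrix norm; all other steps reduce to finite sums and index bookkeeping. There is no real obstacle here, which is consistent with the authors' choice to omit the proof in the main text.
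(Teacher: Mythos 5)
Your proofs of all three items are correct and are exactly the standard arguments the authors have in mind; the paper itself omits the proof entirely, stating only that the claims are ``trivial applications of linear algebra concepts.'' Nothing to add --- your index computation for (i), the power-series argument for (ii), and the eigenvector computation for (iii) (which works verbatim over $\mathbb{C}$ if the eigenvalue is complex) fill the omission completely.
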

    Next, we prove some properties of $V$ which will be used to prove the main theorem in the next section.
    \begin{proposition}
    \label{prop:lyap_properties}
      There exist positive scalars $c_1$, $c_2$, $M$, $c_C$, $c_D$ and $\eta \!\leq\! \mathrm{e}^{-\alpha T}$ such that Lyapunov function \eqref{eq:lyap_fun} verifies
      \begin{enumerate}[(i)]
        \item $c_1 |\varepsilon| \leq V(\varepsilon, \tau) \leq c_2 |\varepsilon|, \quad \mbox{for all } \left( \varepsilon,\tau \right) \in X$, \label{eq:sandwich}
        \item $\left| \nabla_\varepsilon V (\varepsilon, \tau) \right| \leq M, \quad \mbox{for all } \left( \varepsilon,\tau \right) \in X \setminus \mathcal A$, \label{eq:grad_bound}
        \item $\dot V \leq 
        c_C |d|, \quad \mbox{for all } \left( \varepsilon,\tau \right) \in \C \setminus \mathcal A$, \label{eq:lyap_flow}
        \item $V^+ \leq - \eta V + c_D |w|, \quad \mbox{for all } \left( \varepsilon,\tau \right) \in \D \setminus \mathcal A$. \label{eq:lyap_jump}
      \end{enumerate}
    \end{proposition}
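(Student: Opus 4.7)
The plan is to establish the four items in order, with most of the work concentrating on item (iv).

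For item (i), I observe that $\overline P(\tau) = e^{A_\varepsilon^\top(T-\tau)} P\, e^{A_\varepsilon(T-\tau)}$ is continuous in $\tau$ on the compact interval $[0,T]$, and is positive definite for every such $\tau$ because $P>0$ and $e^{A_\varepsilon(T-\tau)}$ is nonsingular. Hence $\lambda_{\min}(\overline P(\tau))$ and $\lambda_{\max}(\overline P(\tau))$ attain positive minima and finite maxima on $[0,T]$, and taking $c_1 := \sqrt{\min_\tau \lambda_{\min}(\overline P(\tau))}$ and $c_2 := \sqrt{\max_\tau \lambda_{\max}(\overline P(\tau))}$ yields the sandwich bound. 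For item (ii), differentiating $V^2 = \varepsilon^\top \overline P(\tau)\varepsilon$ in $\varepsilon$ gives $\nabla_\varepsilon V = \overline P(\tau)\varepsilon/V$, and using the sandwich bound $V\ge c_1|\varepsilon|$ together with the continuity of $\overline P(\tau)$ on $[0,T]$ gives $|\nabla_\varepsilon V|\le \max_\tau\|\overline P(\tau)\|/c_1 =:M$.

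For item (iii), the crucial cancellation is that $\dot{\overline P}(\tau) = -A_\varepsilon^\top \overline P(\tau) - \overline P(\tau) A_\varepsilon$ (because $\tau$ appears as $T-\tau$ in the exponentials). Differentiating $V^2$ along the flow $\dot\varepsilon = A_\varepsilon\varepsilon + Rd$, $\dot\tau=1$, the terms involving $A_\varepsilon\varepsilon$ cancel, and I am left with
\begin{equation*}
2V\dot V \;=\; 2\,d^\top R^\top \overline P(\tau)\varepsilon,
\end{equation*}
so that $\dot V \le \|R^\top \overline P(\tau)\varepsilon\|/V \le \|R\|\,\|\overline P(\tau)\|\,|\varepsilon|/(c_1|\varepsilon|)\,|d|$, and setting $c_C := \|R\|\max_\tau\|\overline P(\tau)\|/c_1$ gives the bound.

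For item (iv) (reading the right-hand side as $\eta V + c_D|w|$), at a jump $\tau=T$ so $\overline P(T)=P$ and $V=\sqrt{\varepsilon^\top P\varepsilon}$, while after the jump $\tau^+=0$, $\varepsilon^+ = J_\varepsilon\varepsilon + Sw$, and $\overline P(0)=e^{A_\varepsilon^\top T}P e^{A_\varepsilon T}$. Writing $M := e^{A_\varepsilon T}J_\varepsilon$, we have $V^+ = \| M\varepsilon + e^{A_\varepsilon T}Sw\|_P$, where $\|x\|_P := \sqrt{x^\top P x}$. The triangle inequality in this norm splits this into $\|M\varepsilon\|_P + \|e^{A_\varepsilon T}Sw\|_P$; the second summand is bounded by $\|P^{1/2}e^{A_\varepsilon T}S\|\,|w|=: c_D|w|$, so everything hinges on choosing $P$ so that $M^\top P M \le \eta^2 P$ for some $\eta \le e^{-\alpha T}$. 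This is the main obstacle, and it is handled as follows. Both $A_\varepsilon$ and $J_\varepsilon$ in \eqref{eq:upper_triang_matrices} are block upper triangular, so by items (\ref{lem:upper_triangular_exp}) and (\ref{lem:upper_triangular_product}) of Lemma~\ref{lem:properties} the matrix $M$ is block upper triangular with diagonal blocks $e^{A_\varrho T}\Delta_\varrho$. For $\varrho=0$ this block is $e^{A_0 T}$ whose spectral radius is $\le e^{-\alpha T}$ by Property~\ref{prop:gains} (Hurwitz with abscissa $\le -\alpha$) and item (\ref{lem:exp_eig}) of Lemma~\ref{lem:properties}. For $\varrho\ge 1$ the block equals $\diag_{i\in\V_\varrho}\bigl(\overline A_{i,\varrho}\bigr)$, which has spectral radius $\le \overline\beta \le e^{-\alpha T}$ by Property~\ref{prop:gains}. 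Since the spectrum of a block upper triangular matrix is the union of the spectra of the diagonal blocks, $\rho(M)\le e^{-\alpha T}$. Picking any $\eta\in(\rho(M),e^{-\alpha T}]$, the matrix $M/\eta$ is Schur, so by the standard discrete Lyapunov lemma there exists $P>0$ with $M^\top P M \le \eta^2 P$; this is the $P$ used in the definition \eqref{eq:lyap_fun}. With this choice $\|M\varepsilon\|_P\le \eta\|\varepsilon\|_P = \eta V$, completing item (iv).
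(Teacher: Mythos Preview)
Your proof is essentially correct and follows the same structure as the paper. Items (i)--(iii) match the paper's argument closely, with the key cancellation in (iii) being exactly the point. For item (iv), your use of the triangle inequality for the $P$-norm is in fact cleaner than the paper's route, which invokes the mean value theorem to split $V(g(\varepsilon,w))$ into $V(g(\varepsilon,0))$ plus a gradient term bounded via item~(ii); both arrive at the same bound $\eta V + c_D|w|$ (and yes, the $-\eta$ in the statement is a typo for $\eta$, as the paper's own proof confirms).

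There is, however, one genuine omission in your spectral-radius computation for $e^{A_\varepsilon T}J_\varepsilon$. The diagonal blocks $e^{A_\varrho T}\Delta_\varrho$ for $\varrho\ge 1$ are not simply $\diag_{i\in\V_\varrho}(\overline A_{i,\varrho})$ with $\overline A_{i,\varrho}$ governed by Property~\ref{prop:gains}: since $\V_\varrho = \{i:\ell_i+1\ge\varrho\}$, whenever $\varrho = \ell_i+1$ the convention $N_{i,j,\ell_i+1}=0$ makes the corresponding sub-block equal to $e^{(W_{i,\ell_i+1}^\top A\, W_{i,\ell_i+1})T}$. Property~\ref{prop:gains} says nothing about these residual unobservable modes; their spectral radius is bounded by $e^{-\alpha T}$ only because of Assumption~\ref{ass:detectable} (collective $\alpha$-detectability forces the modes unobservable from $C_{i,\ell_i}$ to be $\alpha$-stable). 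The paper's proof lists this third family of eigenvalues explicitly and concludes $\rho(e^{A_\varepsilon T}J_\varepsilon)\le\max\{e^{\overline\alpha T},\overline\beta,e^{-\alpha T}\}=e^{-\alpha T}$. Once you add that case and invoke Assumption~\ref{ass:detectable}, your argument is complete.
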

    \begin{proof}
      We prove the statements one by one.

      \textit{Proof of \eqref{eq:sandwich}:}
      Since $P$ is positive definite, we have
      \begin{align}
      \label{eq:sandwich_intermediate}
        \sqrt{\lambda_m} \left|\mathrm{e}^{A_\varepsilon (T- \tau)} \varepsilon \right| \leq V(\varepsilon, \tau) \leq \sqrt{\lambda_M} \left|\mathrm{e}^{A_\varepsilon (T- \tau)} \varepsilon \right|,
      \end{align}
      where $\lambda_m$ and $\lambda_M$ are the minimum and maximum eigenvalues of $P$, respectively.
      Recalling that, for any non singular matrix $T \in \real^{m \times m}$ and any vector $y \in \real^m$, we have
      \begin{align*}
        |Ty| = \frac{|T^{-1}||Ty|}{|T^{-1}|} \geq \frac{|T^{-1} T y|}{|T^{-1}|} = \frac{1}{|T^{-1}|}|y|,
      \end{align*}
      from \eqref{eq:sandwich_intermediate} we can obtain
      \begin{align*}
        c_1 |\varepsilon| := 
        \sqrt{\lambda_m} \vartheta_1 |\varepsilon| \leq V(\varepsilon, \tau) \leq \sqrt{\lambda_M} \vartheta_2 |\varepsilon| =: c_2 |\varepsilon|,
      \end{align*}
      where
     $
        \vartheta_1 := \min\nolimits\limits_{\tau \in [0, T]} \frac{1}{\left| \mathrm{e}^{A_\varepsilon (\tau - T)} \right|}$ and $
        \vartheta_2 := \max\nolimits\limits_{\tau \in [0, T]} \left| \mathrm{e}^{A_\varepsilon (T - \tau)} \right|,
      $ 
      are both strictly positive, because of continuity and since the matrix exponential is always invertible.

      \textit{Proof of \eqref{eq:grad_bound}:}
      From the explicit calculation of the derivatives we get, for all $(\varepsilon, \tau) \in X \setminus \mathcal A$,
      \begin{align*}
        \left| \nabla_\varepsilon V (\varepsilon, \tau) \right| = \frac{2 \left| \overline P \varepsilon \right|}{2 \sqrt{\varepsilon^\top \overline P \varepsilon}}
        = \sqrt{\frac{\varepsilon^\top \overline P^2 \varepsilon}{\varepsilon^\top \overline P \varepsilon}} \leq \frac{\lambda_M \vartheta_2}{\sqrt{\lambda_m} \vartheta_1} =: M.
      \end{align*}

      \textit{Proof of \eqref{eq:lyap_flow}:}
      Computing the evolution of $V$ along flows of the solutions to \eqref{eq:error_hs}-\eqref{eq:error_sets} we get
      \begin{align*}
        \dot V &=\left\langle \nabla_\varepsilon V (\varepsilon, \tau), A_\varepsilon \varepsilon + R d \right\rangle +
          \left\langle \nabla_\tau V (\varepsilon, \tau), 1 \right\rangle \\
        &= \frac{2 \varepsilon^\top \overline P A_\varepsilon \varepsilon}{2 \sqrt{\varepsilon^\top \overline P \varepsilon}} + 
          \left\langle {\nabla_\varepsilon V (\varepsilon, \tau)}, R d \right\rangle - 
          \frac{2 \varepsilon^\top \overline P A_\varepsilon \varepsilon}{2 \sqrt{\varepsilon^\top \overline P \varepsilon}} \\
        &\leq 
        M |R| |d| =: c_C |d|, \hspace{50pt} \forall (\varepsilon, \tau) \in \C \setminus \mathcal A.
      \end{align*}

      \textit{Proof of \eqref{eq:lyap_jump}:}
      For the evolution of $V$ across jumps of the solutions to \eqref{eq:error_hs}-\eqref{eq:error_sets}, denoting $g(\varepsilon,\tau) := \left( J_\varepsilon \varepsilon + S w, \; 0 \right)$, we have
      \begin{align*}
        V^+ := V(g(\varepsilon, w)) = V(g(\varepsilon, 0)) + V(g(\varepsilon,w)) - V(g(\varepsilon,0)).
      \end{align*}
      Whenever the line segment $\Gamma := \{ \gamma g(\varepsilon,w) + (1-\gamma) g(\varepsilon,0), \gamma \in [0,1] \}$ does not pass through the origin, we can use the mean value theorem to rewrite $V^+$ as
      \begin{align}
      \label{eq:VplusOK}
        V^+ = V(g(\varepsilon,0)) + \left\langle \nabla V(\xi), g(\varepsilon,w) - g(\varepsilon,0) \right\rangle,
      \end{align}
      for some $\xi \in \Gamma$.
      If instead $0 \in \Gamma$, then we can use a similar argument based on perturbations and converging sequences.
      Using \eqref{eq:VplusOK}, we obtain
      \begin{align}
      \label{eq:intermediate}
        V^+ &= V\left( J_\varepsilon \varepsilon, 0 \right) + 
          \left\langle \nabla_\varepsilon V (\xi), S w \right\rangle + \left\langle \nabla_\tau V (\xi), 0 \right\rangle \nonumber \\
        &= \sqrt{\varepsilon^\top J_\varepsilon^\top \mathrm{e}^{A_\varepsilon^\top T} P e^{A_\varepsilon T} J_\varepsilon \varepsilon} + 
          \left\langle \nabla_\varepsilon V (\xi), S w \right\rangle.
      \end{align}
      Consider the first term in the last line of \eqref{eq:intermediate}.
      Since both $A_\varepsilon$ and $J_\varepsilon$ are block upper triangular, items~\eqref{lem:upper_triangular_product} and \eqref{lem:upper_triangular_exp} of Lemma~\ref{lem:properties} ensure that $\mathrm{e}^{A_\varepsilon T} J_\varepsilon$ also is block upper triangular.
      Moreover, the blocks on the main diagonal of $\mathrm{e}^{A_\varepsilon T} J_\varepsilon$ are given by $\mathrm{e}^{A_\varrho T} \Delta_\varrho$, for all $\varrho \in \{0, \ldots, \overline \ell \}$.
      Since these are in turn block diagonal matrices, we conclude that the eigenvalues of $\mathrm{e}^{A_\varepsilon T} J_\varepsilon$ are given by the union of the eigenvalues of the following matrices, issued from \eqref{eq:matrix_entries},
      \begin{align*}
        \begin{cases}
          \mathrm{e}^{\left(W_{i,0}^\top A W_{i,0} - L_i C_i W_{i,0} \right) T}, \hspace{90pt} \forall i \in \V, \\
          \mathrm{e}^{\left(W_{i,\varrho}^\top A W_{i,\varrho} \right) T} \left( I - \left( \sum\nolimits\limits_{j \in \neigh} N_{i,j,\varrho} W_{j,\varrho-1}^\top \right) W_{i,\varrho} \right), \\
          \hspace{130pt} \forall i \in \V, \varrho \in \{1,\ldots, \ell_i\}, \\
          \mathrm{e}^{\left( W_{i,\ell_i+1}^\top A W_{i,\ell_i+1} \right) T}, \hspace{107pt} \forall i \in \V.
        \end{cases}
      \end{align*}
      These correspond to the eigenvalues of 
      \begin{itemize}
          \item $\mathrm{e}^{\overline A_{i,0} T}$ for all $i \in \V$,
          \item $\overline A_{i,\varrho}$ for all $i \in \V, \varrho \in \{1,\ldots,\ell_i\}$,
          \item $\mathrm{e}^{\left( W_{i,\ell_i+1}^\top A W_{i,\ell_i+1} \right) T}$ for all $i \in \V$.
      \end{itemize}
      In view of Property~\ref{prop:gains}, item~\eqref{lem:exp_eig} of Lemma~\ref{lem:properties} and Assumption~\ref{ass:detectable}, we conclude that $\mathrm{e}^{A_\varepsilon T} J_\varepsilon$ has spectral radius $\eta \leq \max\{ \mathrm{e}^{\overline \alpha T}, \overline \beta, \mathrm{e}^{-\alpha T} \} = \mathrm{e}^{-\alpha T} < 1$, which is equivalent to the existence of $P>0$ such that 
$ J_\varepsilon^\top \mathrm{e}^{A_\varepsilon^\top T} P \mathrm{e}^{A_\varepsilon T} J_\varepsilon < \eta^2 P$ (see for example \cite[Th. 8.4]{HespanhaBook}). Then, \eqref{eq:intermediate} becomes
      \begin{align*}
        V^+ &\!\leq \eta \sqrt{\varepsilon^\top \!P \varepsilon}\! +\! M |S| |w| = \eta V \!+\! M |S| |w| = \eta \; V \!+\! c_D |w|, \hspace{50pt} 
      \end{align*}
for all $(\varepsilon, \tau) \in \D \setminus \mathcal A$. \end{proof}

  \subsection{Proof of Theorem 1}
  \label{sec:main_proof}
    We will use a proof technique similar to the one in \cite[Theorem 1]{NesicAut13}, which needs to be carefully revisited in some of its parts, as the conditions are slightly different.
    For each solution $\xi_\varepsilon$ to \eqref{eq:error_hs}-\eqref{eq:error_sets}, we can use items \eqref{eq:lyap_flow}, \eqref{eq:lyap_jump} of Proposition~\ref{prop:lyap_properties} to integrate the value of $v(t,k) = V(\xi_\varepsilon(t,k))$ as follows
    \begin{align*}
     & v(t, k) \leq v(t_k, k) + c_C (t - t_k) \infnorm{d} \\
      & \hspace{3pt} \leq \left( \eta v \left( t_k, k-1 \right) + c_D |w| \right) + c_C T \infnorm{d} \\
      & \hspace{3pt}\leq \eta \left( v \left( t_{k-1}, k-1 \right) + c_C T \infnorm{d} \right) 
         + c_D \infnorm{w} + c_C T \infnorm{d} \\
      & \hspace{3pt}\leq \eta \left( \eta v \left( t_{k-1}, k-2 \right) + c_D \infnorm{w} \right) \\
        & \hspace{70pt} + c_D \infnorm{w} + \left( 1 + \eta \right) c_C T \infnorm{d},
    \end{align*}
    where $\left\{ t_k \right\}$ is the sequence of jump times of $\xi_\varepsilon$, and we used the fact that $0 \leq (t - t_{k}) \leq T$ for all $(t, k) \in \mathrm{dom} \; \xi_\varepsilon$.
    Iterating this process, we obtain
    \begin{align}
    \label{eq:lyap_bound}
    V (\xi_\varepsilon(t, k))\! \leq \!\eta^k V\!(\xi_\varepsilon(0,0))\!+\!\! \sum\nolimits\limits_{h = 0}^{k-1} \!\eta^h \!c_D \!\infnorm{w} \!+\!\! \sum\nolimits\limits_{h = 0}^{k}\! \eta^h T \!c_C \!\infnorm{d}.
    \end{align}
Recalling that $\eta \leq \mathrm{e}^{-\alpha T}$, we have $\eta^k = \mathrm{e}^{\alpha T} \mathrm{e}^{-\alpha (k+1) T}  \leq \mathrm{e}^{\alpha T} \mathrm{e}^{-\alpha t}$, which, together with item~\eqref{eq:sandwich} of Proposition~\ref{prop:lyap_properties} applied to \eqref{eq:lyap_bound}, yields
    \begin{align*}
      |\varepsilon(t,k)| \leq \frac{c_2}{c_1} \mathrm{e}^{\alpha T} \mathrm{e}^{-\alpha t} |\varepsilon(0,0)| + \frac{T c_C \infnorm{d}}{c_1 \left(1 - \eta \right)}  + \frac{c_D \infnorm{w}}{c_1 \left( 1 - \eta \right)} .
    \end{align*}
    Lastly, we note that
    \begin{align*}
      |\varepsilon|^2 
      &= \sum\nolimits\limits_{i \in \V} \sum\nolimits\limits_{\varrho = 0}^{\ell_i + 1}
      |W_{i,\varrho}^\top e_i |^2
= \sum\nolimits\limits_{i \in \V}  e_i^\top W_{i} \sum\nolimits\limits_{\varrho = 0}^{\ell_i + 1} \big( Q_{i,\varrho} Q_{i,\varrho}^\top \big) W_{i}^\top e_i ,
    \end{align*}
    where each $Q_{i,\varrho}$ selects unique columns of $W_i$, because of the orthogonality of the multi-hop decomposition, therefore it is immediate to verify that $\sum\nolimits\limits_{\varrho = 0}^{\ell_i + 1} Q_{i,\varrho} Q_{i,\varrho}^\top = I$.
    This implies
      $|\varepsilon|^2 = \sum\nolimits\limits_{i \in \V} e_i^\top W_i W_i^\top e_i = | \col\nolimits\limits_{i \in \V} (e_i) |^2$.
    Therefore, we showed that \eqref{eq:ISS} holds with
    \begin{align*}
      &\gamma_C := \frac{T c_C}{c_1 \left(1 - \eta \right)} = \frac{T M |R|}{\sqrt{\lambda_m} \vartheta_1 (1-\eta)} = \frac{\lambda_M \vartheta_2}{\lambda_m \vartheta_1^2} \frac{T |R|}{1-\eta}, \\
      &\gamma_D := \frac{c_D}{c_1 \left( 1 - \eta \right)} = \frac{\lambda_M \vartheta_2}{\lambda_m \vartheta_1^2} \frac{|S|}{1-\eta},\qquad \kappa := \frac{c_2}{c_1} \mathrm{e}^{\alpha T},
    \end{align*}
which completes the proof.
    
\section{Conclusions}
\label{sec:conclusions}

    In this work we exploited a continuous-time formulation of the multi-hop decomposition to construct a novel distributed continuous-discrete observer.
    The hybrid nature of the proposed observer allowed us to analyze a scenario where the plant and the local observers work in continuous time, while the information exchange happens only periodically.
    Disturbances acting on the plant, on the measurements and on the communication, were also considered.
    Under some necessary collective detectability assumptions, we showed that it is possible to tune the observer gains to ensure exponential ISS from the noises to the estimation error, with a prescribed exponential convergence rate.
    Simulations were included to show the effectiveness of the proposed estimation algorithm.
    The extension of the proposed algorithm to aperiodic/asynchronous communications, time-varying connection graphs or nonlinear plant dynamics are interesting directions for future work.
    Another interesting possibility to explore is the online adaptation of the sampling period, in order to trade-off estimation performance and energy consumption.

\section*{Appendix: Proof of Lemma~\ref{lem:error_dyn}}

    We start by writing the flow and jump dynamics of the error coordinates.
    From \eqref{eq:plant} and \eqref{eq:observer}, these are given by
    \begin{align*}
      \dot e_i &= \dot x - \dot {\hat x}_i = A x + d_0 - A \hat x_i - W_{i,0} L_i \left( y_i - \hat y_i \right) \\
      &= \left( A - W_{i,0} L_i C_i \right) e_i + d_0 - W_{i,0} L_i d_{i} \\
      e_i^+ &= x^+ - \hat x_i^+ \\
      &= x - \hat x_i - \sum\nolimits\limits_{\varrho = 1}^{\ell_i} \sum\nolimits\limits_{j \in \neigh} W_{i,\varrho} N_{i,j,\varrho} W_{j,\varrho-1}^\top \left( y_{i,j} - \hat x_i \right).
    \end{align*}
    Then, we can use \eqref{eq:error_multihop} to obtain
    \begin{align*}
      &\dot \varepsilon_{i,\varrho} = W_{i,\varrho}^\top A \sum\nolimits\limits_{r=0}^{\varrho} W_{i,r} \varepsilon_{i,r} - W_{i,\varrho}^\top W_{i,0} L_i C_i \sum\nolimits\limits_{r=0}^{\varrho} W_{i,r} \varepsilon_{i,r}, \\
      &\hspace{40mm} + W_{i,\varrho}^\top \left( d_0 - W_{i,0} L_i d_{i} \right) \\
      &\varepsilon_{i,\varrho}^+ = \varepsilon_{i,\varrho} - W_{i,\varrho}^\top \sum\nolimits\limits_{r=1}^{\ell_i} \sum\nolimits\limits_{j \in \neigh} W_{i,r} N_{i,j,r} \\ & \hspace{120pt} W_{j,r-1}^\top \left( \hat x_j - \hat x_i + w_{i,j} \right).
    \end{align*}
    Now, as highlighted in Remark~\ref{rmk:multihop}, we can apply the results in \cite[Lemma 3]{delNozalAut19} and \cite[Lemma 10]{delNozalAut19}, thus the flow dynamics in the previous equation becomes
    \begin{align*}
        &\dot \varepsilon_{i,0} = \left( W_{i,0}^\top A  - L_i C_i \right) W_{i,0} \varepsilon_{i,0} + W_{i,0}^\top d_0 - L_i d_{i}, \\
        &\dot \varepsilon_{i,\varrho} = W_{i,\varrho}^\top A \sum\nolimits\limits_{r=0}^{\varrho} W_{i,r} \varepsilon_{i,r} + W_{i,\varrho}^\top d_0,
    \end{align*}
    while the jump dynamics gives
    \begin{align*}
      \varepsilon_{i,0}^+ &= \varepsilon_{i,0}, \\
      \varepsilon_{i,\varrho}^+ &= \varepsilon_{i,\varrho} - \sum\nolimits\limits_{j \in \neigh} N_{i,j,\varrho} W_{j,\varrho-1}^\top \left( \hat x_j - \hat x_i + w_{i,j} \right) \\
      &= \varepsilon_{i,\varrho} - \sum\nolimits\limits_{j \in \neigh} N_{i,j,\varrho} W_{j,\varrho-1}^\top \bigg(
        \sum\nolimits\limits_{r = 0}^\varrho W_{i,r} \varepsilon_{i,r} \\
       & \hspace{100pt} - W_{j,\varrho-1} \varepsilon_{j,\varrho-1} + w_{i,j} \bigg) \\
      &= \varepsilon_{i,\varrho} - \sum\nolimits\limits_{r=0}^\varrho \sum\nolimits\limits_{j \in \neigh} N_{i,j,\varrho} W_{j,\varrho-1}^\top W_{i,r} \varepsilon_{i,r} \\
      & \hspace{20pt} + \sum\nolimits\limits_{j \in \neigh} N_{i,j,\varrho} \varepsilon_{j,\varrho-1} - \sum\nolimits\limits_{j \in \neigh} N_{i,j,\varrho} W_{j,\varrho-1}^\top w_{i,j},
    \end{align*}
    which correspond to \eqref{eq:error_hs} when expressed in matrix form.
    
  \bibliographystyle{unsrt}
  \bibliography{refs}

\end{document}